\newtheorem{theorem}{Theorem}[section]
\newtheorem{lemma}[theorem]{Lemma}
\newtheorem{corollary}[theorem]{Corollary}
\newtheorem{conjecture}[theorem]{Conjecture}
\theoremstyle{definition}
\theoremstyle{remark}
\newtheorem{remark}[theorem]{Remark}
\numberwithin{equation}{section}
\newcommand{\ba}{\begin{array}}
\newcommand{\ea}{\end{array}}
\newcommand{\ds}{\displaystyle}
\begin{document}
\date{}

\newcommand{\pe}{\psi}
\def\d{\delta}
\def\ds{\displaystyle}
\def\e{{\epsilon}}
\def\eb{\bar{\eta}}
\def\enorm#1{\|#1\|_2}
\def\Fp{F^\prime}
\def\fishpack{{FISHPACK}}
\def\fortran{{FORTRAN}}
\def\gmres{{GMRES}}
\def\gmresm{{\rm GMRES($m$)}}
\def\Kc{{\cal K}}
\def\norm#1{\|#1\|}
\def\wb{{\bar w}}
\def\zb{{\bar z}}

\newcommand{\Pa}{\partial}

% some definitions of bold math italics to make typing easier.
% They are used in the corollary.

\def\bfE{\mbox{\boldmath$E$}}
\def\bfG{\mbox{\boldmath$G$}}
%\begin{document}

\title {\LARGE \bf Global Threshold Dynamics of a Stochastic Differential Equation SIS Model
 }

\author{{Chuang Xu\thanks{
E-mail address: cx1@ualberta.ca}}\\
\small{Department of Mathematics,}\\
\small{Harbin Institute of Technology, Harbin, Heilongjiang 150001,
P. R. China and}\\
\small{Department of Mathematical and Statistical Sciences,}\\
\small{University of Alberta, Edmonton, Alberta,T6G 2G1, Canada.}}
\maketitle
\begin{abstract}
In this paper, we further investigate the global dynamics of a stochastic differential equation SIS (Susceptible-Infected-Susceptible) epidemic model recently proposed in [A. Gray et al., SIAM. J. Appl. Math., 71 (2011), 876-902]. We present a  stochastic threshold theorem in term of a \textit{stochastic basic reproduction number} $R_0^S:$ the disease dies out with probability one if $R_0^S<1,$ and the disease is recurrent if $R_0^S\geqslant1.$ We prove the existence and global asymptotic stability of a unique invariant density for the Fokker-Planck equation associated with the SDE SIS model when $R_0^S>1.$ In term of the profile of the invariant density, we define a \textit{persistence basic reproduction number} $R_0^P$ and give a persistence threshold theorem: the disease dies out with large probability if $R_0^P\leqslant1,$ while persists with large probability if $R_0^P>1.$ Comparing the \textit{stochastic disease prevalence} with the \textit{deterministic disease prevalence}, we discover that the stochastic  prevalence is bigger than the deterministic  prevalence if the deterministic basic reproduction number $R_0^D>2.$ This shows that noise may increase severity of disease. Finally, we study the asymptotic dynamics of the stochastic SIS model as the noise vanishes and establish a sharp connection with the threshold dynamics of the deterministic SIS model in term of a \textit{Limit Stochastic Threshold Theorem}.

{\bf Keywords} SIS epidemic model, basic reproduction number, global threshold dynamics, invariant density, stochastic differential equation, Fokker-Planck equation.
\end{abstract}
\maketitle

\section{Introduction}\label{sect1}

Transmission of diseases are usually described in compartmental models \cite{DHM,HY,KM,VW}. Most of the early work on mathematical epidemiology focused on deterministic models \cite{AM,DH,DHM,VW}. However, stochastic noise plays an indispensable role in transmission of diseases, especially in a small total population. Therefore it seems more practical to consider stochastic epidemic models.

One way to describe the compartmental stochastic epidemic models is via stochastic differential equations (SDE), which change parameters involved in deterministic models randomly. This is the so-called \textit{parameter perturbation method} of deriving an SDE model from its deterministic counterpart \cite{DGM,G,TBV}.  Such environmental noise reflected in specific parameters of the stochastic epidemic models may indicate whether  severity of disease increases or decreases based on what the deterministic compartmental epidemic models predict and thus influence of these parameters is well captured.

It is well known that epidemic threshold theorem holds for most deterministic compartmental epidemic models \cite{KM,VW}: the disease free equilibrium (DFE) is globally asymptotically stable if the basic reproduction number $R_0\leqslant1,$ and an endemic equilibrium exists and is globally asymptotically stable if $R_0>1.$ However, there seems no global threshold result for SDE epidemic models in literature.

In \cite{TBV}, a stochastic differential equation SIR (susceptible-infected-recovered) model is discussed and a sufficient but not necessary condition for local almost sure asymptotic stability of DFE is presented. Later, in \cite{Lu}, an improved sufficient condition is given by considering an SIRS model which specializes the SIR model in \cite{TBV}. Both of these results are derived by constructing Lyapunov functions. In \cite{C}, a sufficient and necessary condition for local almost sure asymptotic stability of DFE is proved by considering the linearized system.

Perturbing the constant $\beta$ in the deterministic SIS type model
\begin{equation}
  \label{ode}
  \begin{split}
    \frac{{\rm d}S(t)}{{\rm d}t}=&\mu N-\beta S(t)I(t)+\gamma I(t)-\mu S(t),\\
    \frac{{\rm d}I(t)}{{\rm d}t}=&\beta S(t)I(t)-(\mu+\gamma)I(t),
  \end{split}
\end{equation}
randomly  by $\tilde{\beta}{\rm d}t=\beta {\rm d}t+\sigma {\rm d}B(t),$  Gray et al. \cite{G} proposed the following SDE SIS model
\begin{equation}
  \label{0}
  \begin{split}
    {\rm d}S(t)=&[\mu N-\beta S(t)I(t)+\gamma I(t)-\mu S(t)]{\rm d}t-\sigma S(t)I(t){\rm d}B(t),\\
    {\rm d}I(t)=&[\beta S(t)I(t)-(\mu+\gamma)I(t)]{\rm d}t+\sigma S(t)I(t){\rm d}B(t)
  \end{split}
\end{equation}
with initial values $S_0+I_0=N.$ Over a long period of time, the average total population size is assumed to be constant $N.$ Here  $S(t)$ and $I(t)$ are the number of susceptibles and the number of infectives at time $t,$ respectively; $B(t)$ is a Brownian motion; $\mu$ is the per capita (birth) death rate, $\gamma$ the recovery rate, $\beta$ the disease transmission coefficient, and $\sigma$ the standard deviation of the noise. For more details of the model, we refer the reader to \cite{G,HY}.

Since $S(t)+I(t)=N,$ which is a constant, it reduces to studying the following one-dimensional SDE:
\begin{equation}
  \label{1}
  {\rm d}I(t)=I(t)\Big([\beta N-\mu-\gamma-\beta I(t)]{\rm d}t+\sigma [N-I(t)]{\rm d}B(t)\Big).
\end{equation}

In \cite{G}, the global existence, uniqueness, boundedness and positiveness of \eqref{1} are proved, and the counterpart of the basic reproduction number in SDE SIS model is defined by $R_0^S:=\displaystyle\frac{\beta N}{\mu+\gamma}-\frac{\sigma^2N^2}{2(\mu+\gamma)}.$ When $R_0^S<1$ and $\sigma^2\leqslant\displaystyle\frac{\beta}{N},$ or $\sigma^2>\displaystyle\max\bigg\{\frac{\beta}{N},\frac{\beta^2}{2(\mu+\gamma)}\bigg\},$ it is shown that the disease dies out with probability one. When $R_0^S>1,$ it is demonstrated that the disease persists in the sense that \begin{equation}\label{p}
\liminf_{t\to\infty}I(t)\leqslant \tilde{I}_*(\sigma)\leqslant\limsup_{t\to\infty}I(t),
\end{equation} where $\tilde{I}_*(\sigma)=\displaystyle\frac{1}{\sigma^2}\Big(\sqrt{\beta^2-2\sigma^2(\mu+\gamma)}-(\beta-\sigma^2N)\Big).$ Moreover, a unique stationary distribution of model \eqref{1} is proved to exist and formulae for the mean and variance of the stationary distribution are presented. Numerical simulations reveal that as long as $R_0^S<1,$ the disease will die out in the long run. Thus a conjecture on the extinction of the disease is proposed in \cite{G}:
\vskip 0.2cm
\begin{conjecture}\label{co1}\hfill
  If $R_0^S<1$ and $\displaystyle\frac{\beta^2}{2(\mu+\gamma)}\geqslant\sigma^2>\frac{\beta}{N},$ then the disease will die out with probability one.
\end{conjecture}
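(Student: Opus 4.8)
The plan is to abandon the Lyapunov/$\log$\,-transform estimate used by Gray et al.\ (which is precisely what fails in this parameter window) and instead treat \eqref{1} through the classical boundary theory for scalar diffusions. By the global existence, positivity and boundedness already established for \eqref{1}, a solution started in $(0,N)$ remains in the open interval $(0,N)$ for all $t>0$ almost surely, so $I(t)$ is a regular one-dimensional diffusion with drift $b(I)=I(\beta N-\mu-\gamma-\beta I)$ and diffusion coefficient $a^2(I)=\sigma^2I^2(N-I)^2$, both smooth with $a^2>0$ on compact subsets of $(0,N)$. The long-time behaviour is then dictated by the scale function $s$, whose density is
\[
s'(x)=\exp\!\Big(-\int_{x_0}^{x}\frac{2b(y)}{a^2(y)}\,{\rm d}y\Big),\qquad \frac{2b(y)}{a^2(y)}=\frac{2(\beta N-\mu-\gamma-\beta y)}{\sigma^2\,y\,(N-y)^2},
\]
and it suffices to locate the finiteness of $s$ at the two endpoints $0$ and $N$.

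First I would analyse the singularity at $y=0$. There the integrand equals $\frac{p}{y}+O(1)$ with $p:=\frac{2(\beta N-\mu-\gamma)}{\sigma^2N^2}$, so that $s'(x)=C\,x^{-p}\bigl(1+o(1)\bigr)$ as $x\to0^+$; consequently $\int_0 s'<\infty$, i.e.\ $s(0^+)>-\infty$, exactly when $p<1$. A one-line computation gives $p<1\Longleftrightarrow \beta N-\frac{\sigma^2N^2}{2}<\mu+\gamma\Longleftrightarrow R_0^S<1$, which holds by hypothesis. Next I would treat $y=N$, where the integrand behaves like $-\frac{2(\mu+\gamma)}{\sigma^2N}(N-y)^{-2}$; hence $-\int^{y}$ diverges to $+\infty$ like $\frac{2(\mu+\gamma)}{\sigma^2N(N-y)}$, forcing $s'$ to blow up exponentially and giving $s(N^-)=+\infty$ unconditionally.

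With $s(0^+)>-\infty$ and $s(N^-)=+\infty$, the standard convergence theorem for scalar diffusions applies: the probability that $I$ approaches $N$ vanishes and $\lim_{t\to\infty}I(t)=0$ almost surely, which is the asserted extinction. The extra hypotheses peculiar to the conjecture, $\frac{\beta}{N}<\sigma^2\leqslant\frac{\beta^2}{2(\mu+\gamma)}$, never enter this boundary analysis; they only place the model in the regime where the drift of $\log I$, namely $g(I)=\beta N-\mu-\gamma-\beta I-\frac{\sigma^2}{2}(N-I)^2$, attains a nonnegative maximum $g(I^*)=\frac{\beta^2}{2\sigma^2}-(\mu+\gamma)\geqslant0$ at the interior point $I^*=N-\frac{\beta}{\sigma^2}\in(0,N)$, so that the uniform-negativity argument available for the other ranges of $\sigma^2$ genuinely breaks down.

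The main obstacle will be the rigorous boundary classification at the degenerate endpoints: I must split $\frac{2b}{a^2}$ into its singular part plus an integrable remainder near each of $0$ and $N$, verify the claimed integrability of $s'$ at $0$ and its non-integrability at $N$, and then invoke the precise form of the diffusion convergence theorem valid when both boundaries are inaccessible (the process stays in $(0,N)$ at all finite times, so $0$ is approached only as $t\to\infty$). The conceptual heart is the fact that $s(N^-)=+\infty$ renders the upward excursions driven by the positive drift near $I^*$ almost surely transient; this is exactly the step that replaces, and repairs, the Lyapunov estimate that cannot close in this window.
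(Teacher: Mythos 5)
Your proposal is correct and is essentially the paper's own argument: Theorem~\ref{th1} likewise settles the conjecture by Feller's test, showing the scale function is finite at the lower boundary precisely when $R_0^S<1$ (your exponent condition $p<1$) and infinite at the upper boundary unconditionally, then invoking the scale-function dichotomy together with non-explosion. The only difference is cosmetic: the paper first maps $(0,N)$ onto $\mathbb{R}$ via $Y=\log\frac{I}{N-I}$ so that its appendix statement of Feller's test (Lemma~\ref{le3}, formulated on $\mathbb{R}$ with constant diffusion coefficient) applies directly, whereas you apply the interval form of the same criterion to $I$ on $(0,N)$; since the scale function is invariant under monotone changes of coordinates, the two computations are identical in content.
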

\vskip 0.5\baselineskip

In this paper, we further investigate the dynamics of the stochastic SIS model \eqref{0}. Instead of using the standard approach of constructing a Lyapunov function, we use {\em Feller's test} for explosions (see Lemma~\ref{le3} in the Appendix) to establish a global threshold theorem for model \eqref{0}. We prove that the disease dies out with probability one if $R_0^S<1,$ and the disease is \textit{recurrent} in the sense that the process of disease transmission is recurrent if $R_0^S\geqslant1.$ For a precise definition of recurrence in this paper, we refer the reader to Lemma~\ref{le3} in the Appendix. Although the existence of the stationary distribution is presented in \cite{G}, the profile of the stationary distribution, which contains more information about the SDE SIS model, is not addressed. Instead of studying the SDE, we investigate the Fokker-Planck equation (FPE) associated with \eqref{1} and solve the invariant density. By constructing a proper Lyapunov function, we show that the invariant density is unique and globally asymptotically stable. We further define a \textit{persistence basic reproduction number} $R_0^P,$  \textit{disease-extinction with large probability} and \textit{disease-persistence with large probability} in term of the profile of the invariant density. We show that the invariant density peaks at $0$ if $R_0^P<1,$ while peaks at some positive number $I_*(\sigma)$ if $R_0^P>1.$ Comparing the \textit{stochastic disease prevalence} with the \textit{deterministic disease prevalence}, we discover that the stochastic  prevalence is greater than the deterministic  prevalence if the {\em deterministic basic reproduction number} $R_0^D>2.$ These results reveal that stochasticity favors disease extinction if the basic reproduction number is small, but enhances severity of disease if the basic reproduction number is large. %This allows us to see clearly with how much probability the disease will prevail. By
By analyzing the asymptotic behavior of the invariant density as $\sigma\to0,$ we establish a sharp connection with model \eqref{ode} via a \textit{Limit Stochastic Threshold Theorem}.

This paper is organized as follows. In the next section, we present the global stochastic threshold theorem. We derive the existence, uniqueness, global stability and an explicit formula of an invariant density of the FPE associated with \eqref{1} in Section 3. Then  we define the persistence basic reproduction number and give a persistence threshold theorem in Section 4. Finally, we study the asymptotic behavior of the invariant density as the noise vanishes and establish the Limit Stochastic Threshold Theorem in Section 5. We list some preliminary results on Feller's test and FPE in the Appendix.
%The title of your section 2
\section{Stochastic Threshold Theorem}
\label{sect2}
In this section, we prove a stochastic threshold theorem: the disease dies out with probability one if $R_0^S<1,$ but is recurrent in the sense that the process of disease transmission is recurrent if $R_0^S\geqslant1.$

For the reader's convenience, we first restate the global existence of a unique bounded positive solution of \eqref{1}.
\vskip 0.2cm
\begin{lemma}{\rm \cite{G}}\label{le1}
  For any given initial value $I(0)=I_0\in(0,N),$ the SDE \eqref{1} has a unique global positive solution $I(t)\in(0,N)$ for all $t\geqslant0$ with probability one, namely,
  $$\mathbb{P}\{I(t)\in(0,N) : \forall\ t\geqslant0\}=1.$$
\end{lemma}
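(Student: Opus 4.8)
The plan is to follow the classical localization argument for SDEs with locally Lipschitz coefficients: first extract a unique maximal local solution, then rule out both explosion and attainment of the boundary points $0$ and $N$ by exhibiting a Lyapunov function that blows up at the two endpoints while keeping its generator bounded above.

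First I would note that the drift $b(I)=I(\beta N-\mu-\gamma-\beta I)$ and the diffusion coefficient $g(I)=\sigma I(N-I)$ are polynomials, hence locally Lipschitz on $(0,N)$. Standard It\^o existence--uniqueness theory then yields a unique maximal local solution $I(t)$, defined up to an exit/explosion time. To control this time, fix $n_0$ with $I_0\in[1/n_0,N-1/n_0]$ and define, for $n\geq n_0$, the stopping times
$$\tau_n=\inf\{t\geq0 : I(t)\notin(1/n,\,N-1/n)\},$$
which increase to a limit $\tau_\infty$. The entire problem reduces to showing $\tau_\infty=\infty$ almost surely, since then the solution never leaves any compact subinterval in finite time, so it is global and stays in $(0,N)$.

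Next I would introduce $V(I)=-\ln I-\ln(N-I)+\ln(N^2/4)$ on $(0,N)$, which is nonnegative (minimized at $I=N/2$) and satisfies $V(I)\to\infty$ as $I\to0^+$ or $I\to N^-$. Applying the generator $\mathcal{L}V=V'b+\tfrac12 V''g^2$, the decisive observation is that the second-order term collapses to $\tfrac{\sigma^2}{2}\big(I^2+(N-I)^2\big)$, because the factor $1/I^2+1/(N-I)^2$ in $V''$ exactly cancels the $\sigma^2 I^2(N-I)^2$ in $g^2$; this is bounded by $\sigma^2N^2/2$ on the whole interval. The first-order term $V'b=\frac{2I-N}{N-I}(\beta N-\mu-\gamma-\beta I)$ tends to the finite constant $-(\beta N-\mu-\gamma)$ as $I\to0^+$, and to $-\infty$ as $I\to N^-$ (since the bracket there approaches $-(\mu+\gamma)<0$). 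Hence $\mathcal{L}V\leq K$ for a constant $K$ on all of $(0,N)$. Verifying this upper bound --- in particular spotting the cancellation of the boundary singularities --- is the main technical point; everything else is bookkeeping.

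Finally, Dynkin's formula gives $\mathbb{E}\,V(I(t\wedge\tau_n))\leq V(I_0)+Kt$. On the event $\{\tau_n\leq t\}$ the solution sits at $1/n$ or $N-1/n$, where $V$ is at least $m_n:=\min\{V(1/n),V(N-1/n)\}\to\infty$, so using $V\geq0$,
$$\mathbb{P}(\tau_n\leq t)\leq\frac{V(I_0)+Kt}{m_n}\xrightarrow[n\to\infty]{}0.$$
Letting $n\to\infty$ forces $\mathbb{P}(\tau_\infty\leq t)=0$ for every $t>0$, whence $\tau_\infty=\infty$ almost surely. This simultaneously excludes explosion and hitting the endpoints, so the solution is global and remains in $(0,N)$ with probability one, as claimed.
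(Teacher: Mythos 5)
Your proof is correct and complete: the localization by the stopping times $\tau_n$, the logarithmic Lyapunov function $V(I)=-\ln I-\ln(N-I)+\ln(N^2/4)$ whose second-order term collapses to the bounded quantity $\tfrac{\sigma^2}{2}\big(I^2+(N-I)^2\big)$, and the Markov-inequality step $\mathbb{P}(\tau_n\leqslant t)\leqslant (V(I_0)+Kt)/m_n$ all check out. Note that the paper itself gives no proof of this lemma --- it is quoted from \cite{G} --- and your argument is essentially the same stopping-time/Lyapunov-function argument used there, so there is nothing to add.
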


%In the following, making a proper substitution, we use Feller's test for explosions to establish a stochastic threshold theorem.

Now we state the following stochastic threshold theorem, which in particular, verifies Conjecture \ref{co1} in Section \ref{sect1}.

\vskip 0.2cm
\begin{theorem}[\textbf{Stochastic Threshold Theorem}]\label{th1}\hfill
  \begin{enumerate}
    \item[{\rm (1)}] If $R_0^S<1,$ %then for any given initial value $Y(0)=Y_0\in\mathbb{R},$
  %$$\mathbb{P}\{\lim_{t\to\infty}Y(t)=-\infty\}=1.$$ Equivalently,
  for any given initial value $I(0)=I_0\in(0,N),$  $$\mathbb{P}\Big\{\lim_{t\to\infty}I(t)=0\Big\}=1.$$ In other words, the disease dies out with probability one.
  \item[{\rm (2)}] If $R_0^S\geqslant1,$ %then for any given initial value $Y(0)=Y_0\in\mathbb{R},$ $$\mathbb{P}\{\underset{0\leqslant t<\infty}{\sup}Y(t)=\infty\}=\mathbb{P}\{\underset{0\leqslant t<\infty}{\inf}Y(t)=-\infty\}=1.$$ In particular, the process $Y$ is recurrent. Equivalently,
      for any given initial value $I(0)=I_0\in(0,N),$ $$\mathbb{P}\Big\{\underset{0\leqslant t<\infty}{\sup}I(t)=N\Big\}=\mathbb{P}\Big\{\underset{0\leqslant t<\infty}{\inf}I(t)=0\Big\}=1.$$ In particular, the process $I_t$ is recurrent: for every $x\in(0,N),$ we have $$\mathbb{P}\{I(t)=x:\ \exists\ t\in[0,\infty)\}=1.$$ In other words, the disease is recurrent.
  \end{enumerate}
  \end{theorem}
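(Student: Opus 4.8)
The plan is to read off both statements from Feller's test (Lemma~\ref{le3}) applied to the scalar diffusion \eqref{1} on the open interval $(0,N)$, whose behaviour at the endpoints $0$ and $N$ is governed entirely by the scale function. First I would put \eqref{1} in the standard form $\mathrm{d}I=b(I)\,\mathrm{d}t+s(I)\,\mathrm{d}B$, with $b(I)=I\big(\beta N-\mu-\gamma-\beta I\big)$ and $s(I)=\sigma I(N-I)$, so that $s^2(I)=\sigma^2 I^2(N-I)^2>0$ on $(0,N)$. Because Lemma~\ref{le1} already confines the trajectory to $(0,N)$ for all time, neither endpoint is reached in finite time, so the long-time dichotomy supplied by the scale function applies directly, and the entire proof reduces to computing the scale function and deciding, at each endpoint, whether it is finite or infinite.

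Next I would compute the scale density $p'(x)=\exp\!\left(-\int_c^x \frac{2b(y)}{s^2(y)}\,\mathrm{d}y\right)$ for a fixed $c\in(0,N)$. The key algebraic step is the partial-fraction expansion
\[
\frac{2b(y)}{s^2(y)}=\frac{2}{\sigma^2}\left(\frac{A}{y}+\frac{A}{N-y}+\frac{C}{(N-y)^2}\right),\qquad A=\frac{\beta N-\mu-\gamma}{N^2},\quad C=-\frac{\mu+\gamma}{N},
\]
which integrates in closed form to give
\[
p'(x)=\text{const}\cdot x^{-\alpha}(N-x)^{\alpha}\exp\!\left(\frac{\kappa}{N-x}\right),\qquad \alpha=\frac{2(\beta N-\mu-\gamma)}{\sigma^2N^2},\quad \kappa=\frac{2(\mu+\gamma)}{\sigma^2N}>0.
\]
The decisive observation is that the exponent $\alpha$ is tied to the threshold by $\alpha<1\iff R_0^S<1$ and $\alpha\geqslant1\iff R_0^S\geqslant1$, which is a one-line rearrangement of the definition $R_0^S=\frac{\beta N}{\mu+\gamma}-\frac{\sigma^2N^2}{2(\mu+\gamma)}$.

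With $p'$ in hand I would analyse the two endpoint integrals. Near $N$ the factor $\exp(\kappa/(N-x))$ with $\kappa>0$ blows up so fast that $\int^N p'(y)\,\mathrm{d}y=+\infty$ for all admissible parameters; hence the scale function satisfies $p(N^-)=+\infty$ in every case, so $N$ is never approached as a limit. Near $0$ we have $p'(y)\sim \text{const}\cdot y^{-\alpha}$, and $\int_0 y^{-\alpha}\,\mathrm{d}y$ converges iff $\alpha<1$; therefore $p(0^+)$ is finite precisely when $R_0^S<1$ and equals $-\infty$ precisely when $R_0^S\geqslant1$. Feeding these facts into Lemma~\ref{le3} closes the argument: when $R_0^S<1$ we are in the regime $p(0^+)>-\infty,\ p(N^-)=+\infty$, which forces $\lim_{t\to\infty}I(t)=0$ almost surely, giving part~(1); when $R_0^S\geqslant1$ we are in the regime $p(0^+)=-\infty,\ p(N^-)=+\infty$, which yields recurrence, i.e. $\sup_t I(t)=N$ and $\inf_t I(t)=0$ a.s., and by path-continuity together with the intermediate value theorem every level $x\in(0,N)$ is attained, giving part~(2).

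The computational part, although routine, requires care: the partial-fraction coefficients must be checked so that the exponent at $0$ is exactly $\alpha$. The conceptual crux, and the only place where genuine attention is needed, is confirming that the right endpoint $N$ is \emph{always} inaccessible, so that the threshold is driven entirely by the behaviour at $0$; and handling the equality case $R_0^S=1$, where the borderline divergence of $\int_0 y^{-1}\,\mathrm{d}y$ is only logarithmic and must be verified to land on the recurrent side rather than the extinction side.
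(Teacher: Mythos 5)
Your computations are all correct: the partial-fraction coefficients, the scale density $x^{-\alpha}(N-x)^{\alpha}e^{\kappa/(N-x)}$, the equivalence $\alpha<1\Leftrightarrow R_0^S<1$, the inaccessibility of the endpoint $N$ forced by the factor $e^{\kappa/(N-x)}$, and the placement of the borderline case $R_0^S=1$ (logarithmic divergence of the scale function at $0$) on the recurrent side all agree with what the paper obtains. The route, however, differs genuinely in execution. The paper never computes the scale function of \eqref{1} on $(0,N)$ directly: it first applies the logit transformation $Y=g(I)$ with $g(\xi)=\log\frac{\xi}{N-\xi}$ from \eqref{3}, derives via It\^o's formula the SDE \eqref{2} for $Y$ on all of $\mathbb{R}$ (with constant diffusion coefficient $\sigma N$), establishes conservativeness (Theorem~\ref{th10}), and only then computes the scale function and invokes Lemma~\ref{le3}. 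The reason is that Lemma~\ref{le3} as stated in the Appendix applies only to diffusions on $\mathbb{R}$ satisfying (H1)--(H2) globally; for \eqref{1} viewed on $(0,N)$, hypothesis (H1) fails at the endpoints and the state space is not $\mathbb{R}$, so ``Lemma~\ref{le3} applied on $(0,N)$'' is not literally available. What you are actually invoking is the interval form of the Feller scale-function dichotomy for regular diffusions on $(\ell,r)$ --- a standard result, contained in the reference the paper itself cites for the complete statement of Feller's test \cite{KS} --- with the non-exit hypothesis supplied by Lemma~\ref{le1}. Granting that, your argument stands, and it buys a shorter proof: no It\^o-formula verification of \eqref{2} and no separate conservativeness step, with the threshold appearing transparently as the exponent $\alpha$ at the endpoint $0$. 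What the paper's transformation buys is that its self-contained Appendix lemma applies verbatim. The two computations are equivalent, as they must be, since the scale function is invariant under monotone state transformations: your exponent satisfies $1-\alpha=-\frac{2}{\sigma^2N^2}\bigl(\beta N-\mu-\gamma-\frac{1}{2}\sigma^2N^2\bigr)$, which is exactly the exponential rate governing whether $\psi(-\infty)$ is finite in the paper's proof. To make your write-up airtight, either cite the interval version of the dichotomy explicitly or insert the one-line transformation reducing to Lemma~\ref{le3}.
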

  \vskip 0.2cm
\begin{proof}
By Lemma \ref{le1}, let $Y(t)=g(I(t)),$ where $g:(0,N)\to\mathbb{R}$ is defined by
\begin{equation}
  \label{3}
  g(\xi)=\log\frac{\xi}{N-\xi}.
\end{equation} Using I\^{t}o's formula, it is easy to verify that $Y(t)$ solves the following SDE
\begin{equation}
  \label{2}
  {\rm d}Y(t)=\bigg[\Big(\beta N-\mu-\gamma-\frac{1}{2}\sigma^2N^2\Big)-(\mu+\gamma)e^{Y(t)}+\frac{\sigma^2N^2e^{Y(t)}}{1+e^{Y(t)}}\bigg]{\rm d}t+\sigma N{\rm d}B(t).
\end{equation}

In fact, by \eqref{1}, \[{\rm d}Y(t)=\bigg[b(I(t))g'(I(t))+\frac{1}{2}\Big(a\big(I(t)\big)\Big)^2g''(I(t))\bigg]{\rm d}t+a(I(t))g'(I(t)){\rm d}B(t),\] where $a(\xi)=\sigma(N-\xi)\xi,$ $b(\xi)=(\beta N-\mu-\gamma)\xi-\beta \xi^2,$ $g'(\xi)=\displaystyle\frac{1}{N-\xi}+\frac{1}{\xi}$ and\\ $g''(\xi)=\displaystyle\frac{1}{(N-\xi)^2}-\frac{1}{\xi^2}.$ Thus \begin{equation}
  \label{8}
  {\rm d}Y(t)=\bigg[\Big(\beta N-\mu-\gamma-\frac{1}{2}\sigma^2N^2\Big)-\frac{(\mu+\gamma)I(t)}{N-I(t)}+\sigma^2NI(t)\bigg]{\rm d}t+\sigma N{\rm d}B(t).
\end{equation} Substituting $I(t)=g^{-1}(Y(t))=\displaystyle\frac{Ne^{Y(t)}}{1+e^{Y(t)}}$ into \eqref{8}, we obtain \eqref{2}.

 Next, by Lemma \ref{le1} and Lemma \ref{le2} in the Appendix, we have the global existence of a unique solution to \eqref{2}:
\begin{theorem}
  \label{th10}
For any given initial value $Y(0)=Y_0\in\mathbb{R},$ the SDE \eqref{2} has a unique global solution $Y(t)\in\mathbb{R}$ for all $t\geqslant0$ with probability one, namely,
  \begin{equation}\label{22}\mathbb{P}\{Y(t)\in\mathbb{R}\ :\ \forall\ t\geqslant0\}=1.\end{equation}
\end{theorem}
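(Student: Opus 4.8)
The plan is to transfer global existence and uniqueness from \eqref{1} to \eqref{2} through the change of variables $g$, exploiting that $g(\xi)=\log\frac{\xi}{N-\xi}$ is a smooth strictly increasing bijection of $(0,N)$ onto $\mathbb{R}$, with smooth inverse $g^{-1}(y)=\frac{Ne^{y}}{1+e^{y}}$. Since the It\^{o} computation preceding the statement already shows that $Y(t)=g(I(t))$ satisfies \eqref{2}, the real content of the theorem is that $Y$ neither explodes in finite time nor fails to be unique, and both facts will be inherited from the corresponding properties of $I$ established in Lemma~\ref{le1}.

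First I would observe that the coefficients of \eqref{2} are locally Lipschitz on all of $\mathbb{R}$: the diffusion coefficient $\sigma N$ is constant, and the drift is $C^{\infty}$ since $1+e^{y}>0$ everywhere. Hence, by the standard local existence--uniqueness theorem (Lemma~\ref{le2} in the Appendix), \eqref{2} admits a unique strong solution up to an explosion time $\tau_{e}=\lim_{n\to\infty}\inf\{t\geqslant0:|Y(t)|\geqslant n\}$. I would stress here that the drift contains the term $-(\mu+\gamma)e^{y}$, which grows exponentially and therefore violates the usual linear-growth condition; this is precisely why non-explosion cannot be read off from a standard growth estimate and must instead be obtained from the boundedness of $I$.

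For non-explosion I would invoke the pathwise conclusion $\mathbb{P}\{I(t)\in(0,N):\forall\,t\geqslant0\}=1$ of Lemma~\ref{le1}. Because $g$ maps the open interval $(0,N)$ onto all of $\mathbb{R}$, the process $Y(t)=g(I(t))$ remains finite for every $t\geqslant0$ with probability one: an explosion $|Y(t)|\to\infty$ in finite time would force $I(t)\to0$ or $I(t)\to N$, which the confinement of the whole path of $I$ to $(0,N)$ forbids, so $\tau_{e}=\infty$ almost surely. For uniqueness, any solution $\widehat{Y}$ of \eqref{2} up to its explosion time yields, via $\widehat{I}:=g^{-1}(\widehat{Y})$ and It\^{o}'s formula, a solution of \eqref{1} with initial value $I_{0}=g^{-1}(Y_{0})$ and values in $(0,N)$; the uniqueness in Lemma~\ref{le1} then forces $\widehat{I}=I$ on their common interval, whence $\widehat{Y}=g(I)=Y$. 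The main (if modest) obstacle is the bookkeeping in this last step: one must check that explosion of $Y$ is \emph{exactly} equivalent to $I$ reaching the boundary $\{0,N\}$, and that it is the path-level statement of Lemma~\ref{le1}, not merely its fixed-time marginals, that excludes this. Everything else is routine once $g$ is recognized as a global diffeomorphism with smooth inverse.
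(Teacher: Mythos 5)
Your proof is correct and follows essentially the same route as the paper: the paper's (very terse) argument is precisely to combine Lemma~\ref{le1} (confinement of $I$ to $(0,N)$ for all time, with uniqueness) with Lemma~\ref{le2} (that $g$ is a strictly increasing bijection of $(0,N)$ onto $\mathbb{R}$) to transfer global existence and uniqueness from \eqref{1} to \eqref{2}, which is exactly your non-explosion and pull-back uniqueness argument spelled out in detail. One small correction: Lemma~\ref{le2} in the Appendix is the monotonicity/bijectivity statement for $g$, not a local existence--uniqueness theorem, so your parenthetical citation of it for local well-posedness under locally Lipschitz coefficients is misdirected (that step is standard, but it is not what the lemma provides, and in the paper's own argument it is not needed since uniqueness is obtained entirely by mapping solutions of \eqref{2} back to solutions of \eqref{1} via $g^{-1}$).
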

By Theorem~\ref{th10}, the process $Y_t$ is {\em conservative} (for its definition, see p.153 in \cite{I}).

The scale function defined before Lemma~\ref{le3} in the Appendix for \eqref{2} is given by$$\psi(x)=\int_0^x\phi(\xi){\rm d}\xi$$with \[\begin{split}\phi(\xi)=&\exp\Bigg\{-\frac{2}{\sigma^2N^2}\int_0^{\xi}\bigg[\Big(\beta N-\mu-\gamma-\frac{1}{2}\sigma^2N^2\Big)-(\mu+\gamma)e^r+\frac{\sigma^2N^2e^r}{1+e^r}\bigg]{\rm d}r\Bigg\}\\
  =&\exp\Bigg\{-\frac{2}{\sigma^2N^2}\Big(\beta N-\mu-\gamma-\frac{1}{2}\sigma^2N^2\Big)\xi+\frac{2(\mu+\gamma)}{\sigma^2N^2}\big(e^{\xi}-1\big)-2\log\frac{e^{\xi}+1}{2}\Bigg\}.\end{split}\] It is obvious that $\psi(\infty)=\infty.$
Note that $$\phi(\xi)\thicksim\exp\bigg\{-\frac{2}{\sigma^2N^2}\Big(\beta N-\mu-\gamma-\frac{1}{2}\sigma^2N^2\Big)\xi-\frac{2(\mu+\gamma)}{\sigma^2N^2}+2\log2\bigg\},\ \mbox{as}\ \xi\to-\infty.$$ Recall that $R_0^S=\displaystyle\frac{\beta N}{\mu+\gamma}-\frac{\sigma^2N^2}{2(\mu+\gamma)}.$
If $R_0^S<1,$ then $\beta N-\mu-\gamma-\displaystyle\frac{1}{2}\sigma^2N^2<0,$ which implies that $\psi(-\infty)>-\infty;$ if $R_0^S\geqslant1,$ then $\beta N-\mu-\gamma-\displaystyle\frac{1}{2}\sigma^2N^2\geqslant0,$ which implies $\psi(-\infty)=-\infty.$
Hence by Lemma~\ref{le2} and Lemma~\ref{le3} in the Appendix, we arrive at the conclusions.
\end{proof}

\section{Invariant density}

In this section, for $R_0^S>1,$ we give the existence, uniqueness, global asymptotic stability and an explicit formula of the invariant density of the FPE associated with SDE \eqref{1}
\begin{equation}
  \label{10}
  \frac{\partial p(t,x)}{\partial t}=-\frac{\partial}{\partial x}\Big\{x\big[\beta N-\mu-\gamma-\beta x\big]p(t,x)\Big\}+\frac{1}{2}\sigma^2\frac{\partial^2}{\partial x^2}\big(x^2(N-x)^2p(t,x)\big).
\end{equation}
\vskip 0.2cm
%By Theorem~\ref{th1}, $I(t)\geqslant0,$ a.s. Let $\Xi(t)=log I(t).$ Then $\Xi(t)$ solves the following stochastic differential equation.
%\begin{equation}
%  \label{4}
%  d\Xi=[\beta/N(N-e^{\Xi})-b-\gamma-\frac{\sigma^2}{2}]{\rm d}t+\sigma dB,
%\end{equation}
\begin{theorem}\label{th11}
If $R_0^S>1,$ then there exists a unique invariant probability measure $\nu_{\sigma}^s$ for \eqref{10} which has the density $p_{\sigma}^s$ with respect to the Lebesgue measure. Moreover, the invariant density $p_{\sigma}^s$ is globally asymptotically stable in the sense that
  \begin{equation}
    \lim_{t\to\infty}\int_0^N|\mathcal{P}(t)q(x)-p_{\sigma}^s(x)|{\rm d}x=0,\ \forall\ q\in L^1_+((0,N)),
  \end{equation}where $\{\mathcal{P}(t)\}_{t\geqslant0}$ is the Markov semigroup defined by \eqref{10} and $L^1_+((0,N)):=\{w\in L^1(\mathbb{R}): \int_0^Nw(x){\rm d}x=1,\ w(x)=0\  \text{for}\ x\geqslant N\ \text{or}\ x\leqslant0, \mbox{and}\ w(x)\geqslant0\ \text{for}\ x\in\mathbb{R}\}.$
In addition, the process $I_t$ has the ergodic properties, i.e., for any $\nu_{\sigma}^s$-integrable function $K:$
\begin{equation}
  \mathbb{P}_{I_0}\bigg(\lim_{t\to\infty}\frac{1}{t}\int_0^tK(I_\tau){\rm d}\tau=\int_0^NK(y)\nu_{\sigma}^s({\rm d}y)\bigg)=1,
\end{equation}for all $I_0\in(0,N).$ Moreover, the unique invariant density $p_{\sigma}^s$ of the Markov semigroup $\{\mathcal{P}(t)\}_{t\geqslant0}$ is given by
\begin{equation}
  \label{17}p_{\sigma}^s(x):=CN^3\frac{x^{c_0(R_0^S-1)-1}}{(N-x)^{c_0(R_0^S-1)+3}}e^{-c_0\frac{x}{N-x}}
\end{equation}with
\begin{equation}\label{29}
C^{-1}=c_0^{-c_0(R_0^S-1)}\Big[(R_0^S)^2+c_0^{-1}(R_0^S-1)\Big]\Gamma(c_0(R_0^S-1)),\end{equation} $c_0=\displaystyle\frac{2(\mu+\gamma)}{\sigma^2N^2}$ and $\Gamma(\cdot)$ the gamma function.
\end{theorem}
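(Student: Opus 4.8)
The plan is to separate the computational part (deriving the explicit formula \eqref{17}--\eqref{29} and checking normalizability) from the functional-analytic part (existence, uniqueness, global asymptotic stability and ergodicity of the semigroup $\{\mathcal{P}(t)\}$). The explicit density will hand us existence for free, so the real content lies in the convergence statement.

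First I would produce the candidate density by solving the stationary equation. Setting $\partial_t p=0$ in \eqref{10} and integrating once in $x$ rewrites the equation as $\frac{d}{dx}J(x)=0$, where $J(x)=x(\beta N-\mu-\gamma-\beta x)p(x)-\frac12\sigma^2\frac{d}{dx}\big(x^2(N-x)^2p(x)\big)$ is the probability flux; imposing the natural zero-flux condition $J\equiv 0$ reduces matters to the first-order linear ODE $\frac12\frac{d}{dx}\big(a(x)^2p\big)=b(x)p$ with $a(x)=\sigma x(N-x)$ and $b(x)=x(\beta N-\mu-\gamma-\beta x)$. Its solution is $p(x)\propto a(x)^{-2}\exp\big(\int 2b/a^2\,dx\big)$, and I would evaluate $\int 2b/a^2\,dx$ by partial fractions in $x$ and $N-x$: the decomposition produces a $\log\frac{x}{N-x}$ term and a $\frac{1}{N-x}$ term, and rewriting $\frac{N}{N-x}=\frac{x}{N-x}+1$ turns the latter into $-c_0\frac{x}{N-x}$ up to an additive constant. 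Using the definitions of $c_0$ and $R_0^S$ one checks that the exponent of $\frac{x}{N-x}$ equals $c_0(R_0^S-1)+1$, so that dividing by $a(x)^2$ gives exactly \eqref{17}. The hypothesis $R_0^S>1$ enters precisely here: near $x=0$ one has $p_\sigma^s(x)\sim x^{c_0(R_0^S-1)-1}$, which is integrable iff $c_0(R_0^S-1)>0$, i.e. $R_0^S>1$, while near $x=N$ the factor $e^{-c_0 x/(N-x)}$ decays super-polynomially and dominates the algebraic blow-up.

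For the normalization constant \eqref{29} I would substitute $s=\frac{x}{N-x}$, so that $x=\frac{Ns}{1+s}$, $N-x=\frac{N}{1+s}$ and $dx=\frac{N}{(1+s)^2}\,ds$. Writing $\alpha=c_0(R_0^S-1)$, a short computation collapses the integral to $N^{-3}\int_0^\infty s^{\alpha-1}(1+s)^2e^{-c_0 s}\,ds$, where the $N^{-3}$ cancels the $N^3$ in \eqref{17}. Expanding $(1+s)^2$ and using $\int_0^\infty s^{\beta-1}e^{-c_0 s}\,ds=c_0^{-\beta}\Gamma(\beta)$ together with $\Gamma(\beta+1)=\beta\Gamma(\beta)$ yields $c_0^{-\alpha}\Gamma(\alpha)\big[1+2(R_0^S-1)+(R_0^S-1)^2+c_0^{-1}(R_0^S-1)\big]$; since $1+2(R_0^S-1)+(R_0^S-1)^2=(R_0^S)^2$, this is exactly \eqref{29}.

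The substantive part is uniqueness, global asymptotic stability and ergodicity, and I would argue within the theory of integral Markov semigroups. Since the diffusion coefficient $a(x)=\sigma x(N-x)$ is nondegenerate on the open interval $(0,N)$, the operator in \eqref{10} is uniformly elliptic on compact subsets, so by parabolic regularity $\{\mathcal{P}(t)\}$ is an integral Markov semigroup whose kernel $k(t,x,y)$ is smooth and strictly positive on $(0,N)\times(0,N)$ for every $t>0$. For such a semigroup the Foguel-type alternative applies: it is either asymptotically stable or sweeping from compact sets. To exclude sweeping and obtain tightness I would construct a Khasminskii--Lyapunov function, most conveniently in the coordinate $Y=\log\frac{I}{N-I}$ of \eqref{2}, whose generator has drift $\tilde b(y)\to(\mu+\gamma)(R_0^S-1)>0$ as $y\to-\infty$ and $\tilde b(y)\to-\infty$ as $y\to+\infty$; a function behaving like $|y|$ for large $|y|$ then satisfies $\mathcal{L}V\leqslant -c<0$ outside a compact set. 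This rules out sweeping, so the semigroup is asymptotically stable, which gives both uniqueness of $p_\sigma^s$ and the claimed $L^1$ convergence; the same Lyapunov function certifies positive recurrence, whence the Birkhoff--Khinchin ergodic theorem yields the time-average convergence for every $\nu_\sigma^s$-integrable $K$ and every $I_0\in(0,N)$. The main obstacle is exactly this paragraph: the explicit solution makes existence and the formula routine, but the genuine work is the functional-analytic argument, namely verifying the integral-kernel positivity and combining the Foguel alternative with the Lyapunov/tightness estimate to upgrade mere existence to global asymptotic stability in $L^1$.
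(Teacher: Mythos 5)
Your proposal is correct, and both the explicit formula and the normalization constant check out (your partial-fraction computation of $\int 2b/a^2$, the substitution $s=x/(N-x)$, and the identity $1+2(R_0^S-1)+(R_0^S-1)^2=(R_0^S)^2$ reproduce \eqref{17} and \eqref{29} exactly). However, your route differs from the paper's in how the hard part is organized. The paper never works with the degenerate FPE \eqref{10} directly: it routes everything through the logarithmic coordinate $Y=\log\frac{I}{N-I}$, i.e.\ it first proves the analogue of the theorem (Theorem~\ref{th3}) for the FPE \eqref{5} on $\mathbb{R}$, where the diffusion coefficient is the constant $\frac12\sigma^2N^2$ (so Khasminsky's classical fundamental-solution theory applies with no boundary issues), verifies the hypotheses \eqref{18} of the packaged stability theorem (Theorem~\ref{th9}, from [MLL]/[I]) with the Lyapunov function $V(\xi)=e^{-\alpha_0\xi}+\xi^2$, $\alpha_0=\frac{\beta N-\mu-\gamma}{\sigma^2N^2}-\frac12$, checks that \eqref{16} solves the stationary ODE \eqref{21}, and only then transfers existence, uniqueness, stability and ergodicity back to $(0,N)$ via the change-of-variables result (Theorem~\ref{th8}). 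You instead derive \eqref{17} by zero-flux directly in the $x$ variable and obtain stability from kernel positivity plus the Foguel alternative, excluding sweeping with a Khasminskii-type function growing like $|y|$. Two remarks on what each approach buys. First, the arguments are not interchangeable: your $|y|$-type Lyapunov function satisfies only $\mathcal{L}V\leqslant-c$ outside a compact set and would \emph{fail} the paper's inequality $\mathcal{L}V\leqslant-c_2V+\alpha_2$ in \eqref{18} (since $\mathcal{L}V+c_2V\sim c_2|y|\to\infty$ as $y\to-\infty$), so you genuinely need the Foguel/sweeping dichotomy, whereas the paper needs the stronger exponential-type Lyapunov function but gets uniqueness, $L^1$ stability and ergodicity in a single citation. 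Second, your claim that \eqref{10} generates an integral Markov semigroup with smooth strictly positive kernel requires care precisely because $a(x)=\sigma x(N-x)$ degenerates at $x=0,N$; local ellipticity on compacts of $(0,N)$ is the right idea, but the cleanest fix is the one the paper institutionalizes, namely conjugating by $g$ to the uniformly elliptic equation on $\mathbb{R}$ — which you implicitly do anyway when you compute the drift limits in the $Y$ coordinate. Finally, note a shortcut available in your framework: once the explicit invariant density \eqref{17} is in hand and the kernel is positive, sweeping from compact sets is already impossible (the invariant mass of a compact set cannot tend to zero), so the Lyapunov construction is only needed if one wants the stability argument to be independent of the explicit formula.
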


%In the following, we make the same transformation as in Section 2 by letting $Y(t)=g(I(t)),$ where $g:(0,N)\to\mathbb{R}$ is defined in \eqref{3}.
To prove Theorem~\ref{th11}, we need to first study the following FPE associated with SDE \eqref{2}
\begin{equation}
  \label{5}
  \frac{\partial u(t,\xi)}{\partial t}=-\frac{\partial}{\partial \xi}\Bigg\{\bigg[(\beta N-\mu-\gamma-\frac{1}{2}\sigma^2N^2)-(\mu+\gamma)e^{\xi}+\frac{\sigma^2N^2e^{\xi}}{1+e^{\xi}}\bigg]u(t,\xi)\Bigg\}+\frac{1}{2}\sigma^2N^2\frac{\partial^2u(t,\xi)}{\partial \xi^2}.
\end{equation}
Denote by $\{\mathcal{U}(t)\}_{t\geqslant0}$ the Markov semigroup (also called {\em stochastic semigroup}, see Remark~11.8.1 on p.370 in \cite{LM}) defined by \eqref{5}.
%\begin{theorem}
%  $\{\mathcal{U}(t)\}_{t\geqslant0}$ is an integral Markov semigroup with a $C^{\infty}-$smooth kernel $k$ defined by
%  \begin{equation}
%    \mathcal{U}(t)v(\xi)=\int_{-\infty}^{\infty}k(t,\xi;\eta)v(\eta)d\eta.
%  \end{equation}
%\end{theorem}
%\begin{proof}
%  Let $a(\xi)=\sigma$ and $d(\xi)=\beta-b-\gamma-\frac{\sigma^2}{2}-\frac{\beta}{N}e^{\xi}.$ Note that $a(\xi)\neq0,$ for all $\xi\in\mathbb{R},$ which spans the space $\mathbb{R}.$ By H\"{o}rmander Theorem in the Appendix, we know that $k\in C^{\infty}((0,\infty)\times\mathbb{R}).$
  %Then $[a,d]_1(\xi)=a(\xi)d'(\xi)-a'(\xi)d(\xi)=-\frac{\sigma\beta e^{\xi}}{N}\neq0,$ for all $\xi\in\mathbb{R},$
%\end{proof}

Now we give the existence, uniqueness and asymptotic stability of an invariant density of the Markov semigroup $\{\mathcal{U}(t)\}_{t\geqslant0}.$
\vskip 0.2cm
\begin{theorem}
  \label{th3}
If $R_0^S>1,$ then there exists a unique invariant probability measure $\kappa_{\sigma}^s$ for \eqref{5} which has the density $u^s_{\sigma}$ with respect to the Lebesgue measure. Moreover, the invariant density $u^s_{\sigma}$ is globally asymptotically stable in the sense
  \begin{equation}
    \lim_{t\to\infty}\int_{-\infty}^{\infty}|\mathcal{U}(t)v(\xi)-u^s_{\sigma}(\xi)|{\rm d}\xi=0,\ \forall\ v\in L^1_+(\mathbb{R}),
  \end{equation} where $L^1_+(\mathbb{R}):=\{f\in L^1(\mathbb{R}): \int_{-\infty}^{\infty}f(x){\rm d}x=1\ \text{and}\ f(x)\geqslant0,\ \text{for}\ x\in\mathbb{R}\}.$
In addition, the process $Y_t$ has the ergodic properties, i.e., for any $\kappa_{\sigma}^s$-integrable function $F:$
\begin{equation}
  \mathbb{P}_{Y_0}\Big(\lim_{t\to\infty}\frac{1}{t}\int_0^tF(Y_\tau){\rm d}\tau=\int_{-\infty}^{\infty}F(\eta)\kappa_{\sigma}^s({\rm d}\eta)\Big)=1,
\end{equation}for all $Y_0\in\mathbb{R}.$ Moreover, the unique invariant density $u^s_{\sigma}$ of the Markov semigroup $\{\mathcal{U}(t)\}_{t\geqslant0}$ is given by
\begin{equation}
  \label{16}u^s_{\sigma}(\xi):=Ce^{c_0(R_0^S-1)\xi-c_0 e^{\xi}+2\ln(e^{\xi}+1)},
\end{equation}
where $C$ and $c_0$ are defined in \eqref{29} in Theorem~\ref{th11}.
\end{theorem}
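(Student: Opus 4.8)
The plan is to regard \eqref{2} as a one-dimensional, uniformly elliptic diffusion on all of $\mathbb{R}$ (the change of variable $g$ having removed the boundaries at $0$ and $N$), to compute its stationary density explicitly, and then to invoke the theory of asymptotically stable integral Markov semigroups. Write $\mathcal{A}v=b(\xi)v'+\frac{1}{2}\sigma^2N^2v''$ for the generator of $Y_t$, where $b(\xi)=(\beta N-\mu-\gamma-\frac{1}{2}\sigma^2N^2)-(\mu+\gamma)e^{\xi}+\frac{\sigma^2N^2e^{\xi}}{1+e^{\xi}}$ and the diffusion coefficient $\sigma N$ is a nonzero constant. Setting $\partial_t u=0$ in \eqref{5} says exactly that the probability flux $J=b(\xi)u^s(\xi)-\frac{1}{2}\sigma^2N^2(u^s)'(\xi)$ is constant in $\xi$; since any $L^1$ density on $\mathbb{R}$ must vanish at $\pm\infty$, necessarily $J\equiv0$, i.e. $(\log u^s)'(\xi)=2b(\xi)/(\sigma^2N^2)$. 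Integrating from $0$ to $\xi$, using $c_0=2(\mu+\gamma)/(\sigma^2N^2)$ and the identity $\beta N-\mu-\gamma-\frac{1}{2}\sigma^2N^2=(\mu+\gamma)(R_0^S-1)$, reproduces \eqref{16} up to a multiplicative constant. The hypothesis $R_0^S>1$ is precisely what makes $u^s_\sigma$ integrable: as $\xi\to+\infty$ the term $-c_0e^{\xi}$ forces super-exponential decay, while as $\xi\to-\infty$ the density behaves like $e^{c_0(R_0^S-1)\xi}$, which is integrable only because $c_0(R_0^S-1)>0$. Normalizing $\int_{\mathbb{R}}u^s_\sigma=1$ through the substitution $w=e^{\xi}$ turns the integral into a gamma integral and yields the constant $C$ of \eqref{29}; this is routine but slightly lengthy.

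Next I would set up the semigroup framework. By Theorem~\ref{th10} the process $Y_t$ is conservative, and since $\mathcal{A}$ is uniformly elliptic with smooth coefficients, the transition function of $Y_t$ has a smooth, strictly positive density $k(t,\xi,\eta)$ for every $t>0$ (positivity of the fundamental solution of a nondegenerate parabolic equation). Thus $\{\mathcal{U}(t)\}_{t\geqslant0}$ is an integral Markov semigroup with a strictly positive kernel. For such semigroups there is the dichotomy of Pich\'or and Rudnicki: the semigroup is either asymptotically stable or sweeping from compact sets (see also \cite{LM}). Having exhibited a genuine invariant density $u^s_\sigma\in L^1_+(\mathbb{R})$, I can exclude sweeping outright, because a finite invariant measure keeps a fixed positive amount of mass on a suitable compact set for all time, contradicting the definition of sweeping. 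The remaining alternative is asymptotic stability, which is exactly the claimed $L^1$-convergence.

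To make this self-contained at the level of the generator — and this is the role of the Lyapunov function advertised in the introduction — I would produce a nonnegative $C^2$ function $V$ with $V(\xi)\to\infty$ as $|\xi|\to\infty$ and $\mathcal{A}V(\xi)\leqslant-c<0$ outside a compact set, which forces positive recurrence (tightness of the occupation measures) and independently rules out sweeping. Such a $V$ is easy to build because the drift is confining: $b(\xi)\to(\mu+\gamma)(R_0^S-1)>0$ as $\xi\to-\infty$ and $b(\xi)\to-\infty$ as $\xi\to+\infty$. Taking the two-sided barrier $V(\xi)=2\cosh(\theta\xi)$ with $\theta>0$ small, a short computation gives $\mathcal{A}V(\xi)=\theta\, e^{\theta\xi}(b+\frac{1}{2}\sigma^2N^2\theta)+\theta\, e^{-\theta\xi}(-b+\frac{1}{2}\sigma^2N^2\theta)$, in which the $e^{(1+\theta)\xi}$ growth of the first term dominates on the right and the $-b\,e^{-\theta\xi}$ contribution dominates on the left, so $\mathcal{A}V\to-\infty$ as $|\xi|\to\infty$. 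Uniqueness of the invariant density is then automatic from asymptotic stability (two invariant densities would each equal the common $L^1$-limit), and the ergodic identity for $Y_t$ follows from the unique invariance together with the strictly positive kernel by the strong law of large numbers for positively recurrent diffusions.

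I expect the crux to be the verification, rather than the mere invocation, of the hypotheses of the integral-Markov-semigroup stability theorem: establishing the smoothness and strict positivity of the transition kernel of \eqref{2} despite its exponentially growing drift, and exhibiting a Lyapunov function that dominates that drift on the right while still controlling the weaker but strictly positive drift on the left. By comparison, the explicit formula \eqref{16} and its normalization are a direct integration, and ergodicity is a formal consequence once uniqueness is in hand.
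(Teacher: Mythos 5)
Your proposal is correct, and it shares the paper's two-part architecture (an explicit stationary density plus Markov-semigroup/Lyapunov machinery), but the key stability tool you invoke is genuinely different from the paper's. The paper first gets the fundamental solution from \cite{K} (using that $Y_t$ is conservative and the diffusion is non-degenerate), then applies the Lyapunov-criterion theorem of the Appendix (Theorem~\ref{th9}, taken from \cite{MLL,I,LM}) with the barrier $V(\xi)=e^{-\alpha_0\xi}+\xi^2$, where $\alpha_0=\tfrac{1}{2}c_0(R_0^S-1)>0$: verifying the two inequalities \eqref{18} delivers existence, uniqueness, global $L^1$ stability and ergodicity in a single stroke, after which the formula \eqref{16} is checked \emph{a posteriori} to solve the stationary equation \eqref{21} and is identified with the invariant density by uniqueness. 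You instead derive \eqref{16} constructively from the zero-flux ansatz and normalize it explicitly (your gamma-integral computation does reproduce \eqref{29}, which the paper states without derivation), then obtain $L^1$ stability from the Pich\'or--Rudnicki dichotomy for integral Markov semigroups with strictly positive kernel, ruling out sweeping by the invariant density itself, and use $V(\xi)=2\cosh(\theta\xi)$ with $0<\theta<c_0(R_0^S-1)$ for positive recurrence and the ergodic identity. The paper's route is shorter because Theorem~\ref{th9} packages all the conclusions behind one Lyapunov verification; your route makes the formula and the constant $C$ transparent and isolates exactly where the invariant density is used, at the price of verifying the dichotomy's hypotheses (continuity and strict positivity of the transition kernel), which is the same nontrivial input the paper draws from \cite{K}. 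Two small cautions: an $L^1$ function need not vanish at $\pm\infty$, so the zero-flux step should be presented as an ansatz rather than a deduction (harmless here, since you only need to exhibit one invariant density and uniqueness comes from your stability theorem); and your $\cosh$ barrier, exactly like the paper's $e^{-\alpha_0\xi}$ term, works only because $R_0^S>1$ makes the left-tail drift strictly positive, which is where the hypothesis enters in both proofs.
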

\vskip 0.2cm
\begin{proof}
Since $Y_t$ is conservative and non-degenerate (i.e., $\displaystyle\frac{1}{2}\sigma^2N^2>0,\
\forall\ \xi\in\mathbb{R}$), by \cite{K} (see Chapter III), there exists a unique classical fundamental solution to \eqref{5} (see also p.153 in \cite{I}). Thus by Theorem~\ref{th0} in the Appendix, there exists a generalized solution $u(t,\xi)\in L^1_+(\mathbb{R})$ for all $t>0,\ \xi\in\mathbb{R}$ provided that the initial density $u_0\in L^1_+(\mathbb{R}).$

For simplicity, we denote $u^s_{\sigma}$ by $u^s$ throughout this proof.
We first give the existence, uniqueness and global asymptotic stability of $u^s.$  Let $V(\xi):=e^{-\alpha_0\xi}+\xi^2$ with $\alpha_0=\displaystyle\frac{\beta N-\mu-\gamma}{\sigma^2N^2}-\frac{1}{2}.$ Notice that $\alpha_0>0$ for $R_0^S>1.$ It is straightforward to verify that $V$ is a Lyapunov function defined in the Appendix. Let $\delta=2,$ $\alpha_1=c_1=1,$ $c_2=\displaystyle\frac{\bigg(\beta N-\mu-\gamma-\displaystyle\frac{1}{2}\sigma^2N^2\bigg)^2}{4\sigma^2N^2}.$ Then there exists sufficiently large $\alpha_2>0$ such that both inequalities in \eqref{18} in Theorem~\ref{th9} hold. Hence by Theorem~\ref{th9} in the Appendix, we have the uniqueness and global asymptotic stability of the invariant density $u^s.$ Next, we verify that $u^s$ given by \eqref{16} is an invariant density. From \eqref{5}, it suffices to show that \begin{equation}
  \label{20}-\frac{{\rm d}}{{\rm d}\xi}\Bigg\{\bigg[\Big(\beta N-\mu-\gamma-\frac{1}{2}\sigma^2N^2\Big)-(\mu+\gamma)e^{\xi}+\frac{\sigma^2N^2e^{\xi}}{1+e^{\xi}}\bigg]u^s(\xi)\Bigg\}+\frac{1}{2}\sigma^2N^2\frac{{\rm d}^2u^s(\xi)}{{\rm d} \xi^2}=0.
\end{equation} In fact, $u^s$ is a solution of the ODE
\begin{equation}\label{21}
  \frac{{\rm d}u}{{\rm d}\xi}=\frac{2\bigg[\Big(\beta N-\mu-\gamma-\displaystyle\frac{1}{2}\sigma^2N^2\Big)-(\mu+\gamma)e^{\xi}+\displaystyle\frac{\sigma^2N^2e^{\xi}}{1+e^{\xi}}\bigg]}{\sigma^2N^2}u,
\end{equation}which implies $u^s$ solves \eqref{20}. Note that $0<\Gamma(c_0(R_0^S-1))<\infty$ for $R_0^S>1,$ and thus $C$ defined in \eqref{29} is finite. Hence $u^s$ is an invariant density.
\end{proof}
By Theorem~\ref{th3} and Theorem~\ref{th8} in the Appendix, we prove Theorem~\ref{th11}.

\section{Persistence Threshold Theorem}

In this section, for $R_0^S>1,$ we define a persistence basic reproduction number, disease-extinction with large probability and disease-persistence with large probability in term of the profile of the invariant density $p^s_{\sigma}$ given by \eqref{17} and establish a persistence threshold theorem.

Let $R_0^P:=\displaystyle\frac{\beta N-\sigma^2N^2}{\mu+\gamma}$ be the \textit{persistence basic reproduction number}.
Model \eqref{1} is \textit{disease-persistent with large probability} if  the invariant density peaks at a  positive number (or equivalently, the mode of the stationary distribution is positive); otherwise, if the invariant density peaks at zero (or equivalently, the mode of the stationary distribution is zero), then model \eqref{1} is \textit{disease-extinct with large probability}.

In the following, we investigate the profile of the invariant density $p_{\sigma}^s.$
\vskip 0.2cm
\begin{theorem}[\textbf{Profile of Invariant Density}]\label{th5}
Assume $R_0^S>1.$ Then $$\lim_{x\uparrow N}p_{\sigma}^s(x)=0.$$
\begin{enumerate}
\item[{\rm (1)}.] Suppose $R_0^P<1.$ Then $$\lim_{x\downarrow0}p^s_{\sigma}(x)=\infty.$$ Moreover,
\begin{enumerate}
  \item[{\rm (a)}] if $\displaystyle R_0^P\leqslant\displaystyle\frac{4(\sqrt{c_0}-1)}{c_0},$ then  $p_{\sigma}^s$ is strictly decreasing in $(0,N);$
  \item[{\rm (b)}] if $R_0^P>\displaystyle\frac{4(\sqrt{c_0}-1)}{c_0},$ then $p_{\sigma}^s$ is strictly decreasing in $(0,I_-),$ $(I_+,N),$ and increasing in $(I_-,I_+),$ where \begin{equation}\label{+}
        I_{\pm}=\frac{N}{8}\bigg[(4-R_0^Pc_0)\pm\sqrt{(4+R_0^Pc_0)^2-16c_0}\bigg]
      \end{equation} and $c_0$ is defined in \eqref{29} in Theorem~\ref{th11}.\\
\end{enumerate}
\item[{\rm (2)}.] Suppose $R_0^P=1.$ Then  $$\lim_{x\downarrow0}p_{\sigma}^s(x)=C/N,$$ where $C$ is defined in \eqref{29} in Theorem~\ref{th11}. Moreover,
\begin{enumerate}
\item[{\rm (a)}] if $c_0\geqslant4,$ then  $p_{\sigma}^s$ is strictly decreasing in $(0,N);$
  \item[{\rm (b)}] if $c_0<4,$ then $p_{\sigma}^s$ is strictly increasing in $\Bigg(0,\bigg(1-\displaystyle\frac{c_0}{4}\bigg)N\Bigg)$ and decreasing in $\Bigg(\bigg(1-\displaystyle\frac{c_0}{4}\bigg)N,N\Bigg).$\\
\end{enumerate}
  \item[{\rm (3)}.] Suppose $R_0^P>1,$ then $$\lim_{x\downarrow0}p_{\sigma}^s(x)=0.$$ Moreover, $p_{\sigma}^s$ is strictly increasing in $(0,I_*({\sigma}))$ and decreasing in $(I_*({\sigma}),N),$ where \begin{equation}\label{-}
        I_*(\sigma)=\frac{N}{8}\bigg[(4-R_0^Pc_0)+\sqrt{(4+R_0^Pc_0)^2-16c_0}\bigg].
      \end{equation} In particular, the invariant density peaks at $I_*(\sigma).$
\end{enumerate}
\end{theorem}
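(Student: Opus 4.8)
The plan is to read off every assertion from the closed form \eqref{17} for $p_\sigma^s$ and to reduce all monotonicity claims to the sign of a single quadratic. The first step is to rewrite the two exponents in \eqref{17} in terms of $R_0^P$ rather than $R_0^S$. Since the definitions give $R_0^S=R_0^P+1/c_0$, i.e. $c_0R_0^S=c_0R_0^P+1$, one has $c_0(R_0^S-1)-1=c_0(R_0^P-1)$ and $c_0(R_0^S-1)+3=c_0(R_0^P-1)+4$, so that
\begin{equation*}
p_\sigma^s(x)=CN^3\,x^{c_0(R_0^P-1)}(N-x)^{-c_0(R_0^P-1)-4}\,e^{-c_0x/(N-x)}.
\end{equation*}
From this the three boundary values at $0$ are immediate: as $x\downarrow0$ the last two factors tend to the finite positive numbers $N^{-c_0(R_0^P-1)-4}$ and $1$, so $\lim_{x\downarrow0}p_\sigma^s(x)$ is dictated by $x^{c_0(R_0^P-1)}$ and equals $\infty$, $C/N$, or $0$ according as $R_0^P<1$, $R_0^P=1$, or $R_0^P>1$. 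For the limit at $N$ I would substitute $t=x/(N-x)\to\infty$, under which $p_\sigma^s$ becomes $(C/N)\,t^{c_0(R_0^P-1)}(1+t)^{4}e^{-c_0t}$; the exponential factor forces the limit to be $0$ regardless of the sign of the power of $t$.

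For the monotonicity I would differentiate the logarithm. With the shorthand $a=c_0(R_0^P-1)$,
\begin{equation*}
\frac{(p_\sigma^s)'(x)}{p_\sigma^s(x)}=\frac{a}{x}+\frac{a+4}{N-x}-\frac{c_0N}{(N-x)^2},
\end{equation*}
which one may also derive from the ODE \eqref{21} via the change of variables $\xi=\log\frac{x}{N-x}$. Since $p_\sigma^s>0$ and the common denominator $x(N-x)^2$ is positive on $(0,N)$, the sign of $(p_\sigma^s)'$ coincides with the sign of the numerator
\begin{equation*}
Q(x):=-4x^2+(4-c_0R_0^P)Nx+c_0(R_0^P-1)N^2,
\end{equation*}
a downward-opening parabola. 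A short computation shows its discriminant equals $N^2\big[(4+c_0R_0^P)^2-16c_0\big]$ and its two roots are exactly $I_\pm$ as in \eqref{+}, with $I_*(\sigma)=I_+$ as in \eqref{-}. The whole theorem thus reduces to locating the roots of $Q$ relative to $(0,N)$.

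The case analysis is then elementary once one records $Q(N)=-c_0N^2<0$ (always) and $Q(0)=c_0(R_0^P-1)N^2$, whose sign is that of $R_0^P-1$. If $R_0^P>1$ (Case 3), the product of the roots $c_0(1-R_0^P)N^2/4$ is negative, so $Q$ has exactly one positive root; since $Q(0)>0>Q(N)$ this root $I_*$ lies in $(0,N)$, giving strict increase on $(0,I_*)$ and strict decrease on $(I_*,N)$, i.e. a peak at $I_*$. If $R_0^P=1$ (Case 2), then $a=0$ and $Q(x)=x\big[(4-c_0)N-4x\big]$, whose nonzero root $(1-c_0/4)N$ belongs to $(0,N)$ precisely when $c_0<4$; this splits off subcases (a) and (b). If $R_0^P<1$ (Case 1), both $Q(0)$ and $Q(N)$ are negative, so the profile is governed by the discriminant, which is nonpositive exactly when $R_0^P\le 4(\sqrt{c_0}-1)/c_0$; in that regime $Q\le0$ throughout and $p_\sigma^s$ is strictly decreasing (subcase (a)), whereas for $R_0^P>4(\sqrt{c_0}-1)/c_0$ the two roots $I_\pm$ appear and produce the decrease--increase--decrease pattern (subcase (b)).

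I expect the only genuinely delicate point to be Case 1(b): the conditions $Q(0)<0$, $Q(N)<0$ and positive discriminant do not on their own guarantee that both roots $I_\pm$ fall inside $(0,N)$, so I would have to pin down their location through the sign of the vertex abscissa $(4-c_0R_0^P)N/8$ together with the product and sum of the roots, and check consistency with the interval stated in \eqref{+}. The remaining cases are clean: Case 3 is settled purely by the opposite signs $Q(0)>0>Q(N)$, and Cases 1(a) and 2 follow from the explicit discriminant threshold $4(\sqrt{c_0}-1)/c_0$ and the factorization at $R_0^P=1$. Everything beyond these sign checks is routine single-variable calculus.
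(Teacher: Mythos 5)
Your outline is exactly the paper's proof: the paper writes $p_\sigma^s(x)=CN^3e^{h_\sigma(x)}$ with $h_\sigma(x)=c_0(R_0^P-1)\ln x-[c_0(R_0^P-1)+4]\ln(N-x)-c_0x/(N-x)$, computes $h_\sigma'(x)=Q(x)/[x(N-x)^2]$ with precisely your quadratic $Q$, and runs the same case split on the sign of $Q$. Your boundary limits and your Cases (2), (3) and (1)(a) are correct; in fact your root-location arguments via $Q(0)$, $Q(N)$ and the product of roots are more explicit than the paper's, which merely asserts the sign pattern of $h_\sigma'$. (One small point: the discriminant $(4+c_0R_0^P)^2-16c_0$ is nonpositive iff $-4(\sqrt{c_0}+1)/c_0\leqslant R_0^P\leqslant 4(\sqrt{c_0}-1)/c_0$, so the ``exactly when'' you state needs the lower bound $R_0^P>1-1/c_0>-4(\sqrt{c_0}+1)/c_0$ coming from $R_0^S>1$; the paper records this step explicitly.)

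The point you flagged in Case 1(b) is a genuine gap, and it cannot be closed, because the assertion is false in part of the admissible parameter region. The roots satisfy $I_-+I_+=(4-c_0R_0^P)N/4$ and $I_-I_+=c_0(1-R_0^P)N^2/4>0$, so for $R_0^P<1$ both roots have the sign of $4-c_0R_0^P$. If $c_0>4$, a positive discriminant forces $c_0R_0^P>4(\sqrt{c_0}-1)>4$, hence both $I_\pm$ are negative, $Q<0$ on all of $(0,N)$, and $p_\sigma^s$ is strictly decreasing there — contradicting the claimed decrease--increase--decrease pattern. A concrete instance is $c_0=16$, $R_0^P=0.95$: every hypothesis of Case 1(b) holds ($R_0^S=1.0125>1$, and $4(\sqrt{c_0}-1)/c_0=0.75<R_0^P<1$), yet $I_+\approx-0.07N$ and $I_-\approx-2.7N$. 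Conversely, if $c_0<4$ then $c_0R_0^P<c_0<4$ automatically, both roots are positive, and they lie in $(0,N)$: $Q(N)=-c_0N^2<0$ excludes $N\in[I_-,I_+]$, and $I_->N$ would force $c_0R_0^P<-4$, impossible since $R_0^S>1$ gives $c_0R_0^P>c_0-1>-1$. So Case 1(b) holds non-vacuously exactly when $c_0<4$ (at $c_0=4$ it is empty, and for every $c_0>4$ there are admissible $R_0^P$ for which it fails). Note the paper's own proof shares this gap — it never checks $I_\pm\in(0,N)$ — so what you have identified is a missing hypothesis in the theorem itself: carrying out the root-location check you proposed does not prove the statement but corrects it (fold $c_0\geqslant4$ into case (a), and require $c_0<4$ in case (b)).
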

\vskip 0.2cm
\begin{proof}
  Note that $$p_{\sigma}^s(x)=CN^3e^{h_{\sigma}(x)},$$ where $h_{\sigma}(x)=c_0(R_0^P-1)\ln x-[c_0(R_0^P-1)+4]\ln(N-x)-c_0\displaystyle\frac{x}{N-x}.$ The limit $\lim_{x\uparrow N}p_{\sigma}^s(x)=0$ follows from $\lim_{x\uparrow N}h_{\sigma}(x)=-\infty.$ Straightforward calculations show that $$h_{\sigma}'(x)=\frac{-4x^2+(4-R_0^Pc_0)Nx+(R_0^P-1)c_0N^2}{(N-x)^2x}.$$ Next, we prove this theorem case by case.
  \begin{enumerate}
  \item[(1).]
  For $R_0^P<1,$ we have $$\lim_{x\downarrow0}h_{\sigma}(x)=\infty,$$ which implies $$\lim_{x\downarrow0}p^s_{\sigma}(x)=\infty.$$By $R_0^S=R_0^P+c_0^{-1}>1,$ we have $$\frac{-4(\sqrt{c_0}+1)}{c_0}<1-\frac{1}{c_0}<R_0^P.$$ Hence if $R_0^P<1$ and $\displaystyle R_0^P\leqslant\displaystyle\frac{4(\sqrt{c_0}-1)}{c_0},$ we have $$(4-R_0^Pc_0)^2+16(R_0^P-1)c_0\leqslant0.$$ Thus $$h_{\sigma}'(x)\leqslant0\ \text{in}\ (0,N),$$ which indicates that $p_{\sigma}^s$ is strictly decreasing in $(0,N).$

  If $R_0^P<1$ and $R_0^P>\displaystyle\frac{4(\sqrt{c_0}-1)}{c_0},$ then $$h_{\sigma}'(x)<0\ \text{in}\ (0,I_-),\ (I_+,N),\ \text{and}\ h_{\sigma}'(x)>0\ \text{in}\ (I_-,I_+),$$  where $I_{\pm}$ is defined in \eqref{+}. Hence $p_{\sigma}^s$ is strictly decreasing in $(0,I_-),$ $(I_+,N),$ and increasing in $(I_-,I_+).$
  \item[(2).]
  For $R_0^P=1,$ $$\lim_{x\to0^+}h_{\sigma}(x)=-4\ln N,$$ which implies $$\lim_{x\downarrow0}p_{\sigma}^s(x)=C/N.$$ If $R_0^P=1$ and  $c_0\geqslant4,$  then $$h_{\sigma}'(x)<0\ \text{in}\ (0,N),$$ which indicates that $p_{\sigma}^s$ is strictly decreasing in  $(0,N).$

  If $R_0^P=1$ and  $c_0<4,$  then $$h_{\sigma}'(x)>0\ \text{in}\ \Bigg(0,\bigg(1-\frac{c_0}{4}\bigg)N\Bigg),\ \text{and}\ h_{\sigma}'(x)<0\ \text{in}\ \Bigg(\bigg(1-\frac{c_0}{4}\bigg)N,N\Bigg).$$ Hence $p_{\sigma}^s$ is strictly increasing in $\Bigg(0,\bigg(1-\displaystyle\frac{c_0}{4}\bigg)N\Bigg)$ and strictly decreasing in $\Bigg(\bigg(1-\displaystyle\frac{c_0}{4}\bigg)N,N\Bigg).$
  \item[(3).]
  For $R_0^P>1,$  $$h_{\sigma}'(x)>0\ \text{in}\ (0,I_*(\sigma)),\  \text{and}\ h_{\sigma}'(x)<0\ \text{in}\ (I_*(\sigma),N),$$ where $I_*(\sigma)$ is defined in \eqref{-}. Hence
  $p^s_{\sigma}$ is strictly increasing in $(0,I_*({\sigma}))$ while decreasing in $(I_*({\sigma}),N),$ and  peaks at $I_*(\sigma).$ Moreover, $\lim_{x\downarrow0}h_{\sigma}(x)=-\infty$ implies $$\lim_{x\downarrow0}p_{\sigma}^s(x)=0.$$
\end{enumerate}

Now we complete the proof.
\end{proof}
Figure~\ref{fig1} well illustrates results in Theorem~\ref{th5}.

Although the mean and variance of the stationary distribution are given in \cite{G}, for sake of integrity, we restate them as a corollary of Theorem~\ref{th5} and give an alternative proof by considering the FPE \eqref{10} directly.

\vskip 0.2cm
\begin{corollary}[\textbf{Mean and Variance}]\hfill
Assume that $R_0^S>1.$  Then the mean and variance of the invariant density $p_{\sigma}^s$ are given by
  \begin{equation}
    \label{6}
    {\rm E}[I]=I^*(\sigma)
  \end{equation} and
  \begin{equation}
    \label{24}
    {\rm Var}[I]=(I^*-I^*(\sigma))I^*(\sigma),
  \end{equation} where $I^*(\sigma)=\Bigg(1-\displaystyle \frac{1}{R_0^D+1-\frac{R_0^D}{R_0^S}}\Bigg)N$ and $I^*=\bigg(1-\displaystyle \frac{1}{R_0^D}\bigg)N.$
\end{corollary}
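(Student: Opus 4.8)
The plan is to obtain the two moments from complementary angles: extract the second moment from the stationary Fokker--Planck equation \eqref{10} essentially for free, and compute the first moment by integrating the explicit density \eqref{17}. I would arrange the argument so that the variance follows from the mean through a single algebraic identity, which is exactly why \eqref{24} appears in the factored form $(I^*-I^*(\sigma))I^*(\sigma)$.

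First I would exploit the stationary form of \eqref{10}. Writing the drift $b(x)=x(\beta N-\mu-\gamma-\beta x)$ and the diffusion coefficient $a^2(x)=\sigma^2x^2(N-x)^2$, the invariant density satisfies $-(b\,p_\sigma^s)'+\tfrac12(a^2 p_\sigma^s)''=0$, so the probability current $J(x)=b(x)p_\sigma^s(x)-\tfrac12\big(a^2p_\sigma^s\big)'(x)$ is constant on $(0,N)$. Using the boundary behavior from Theorem~\ref{th5} (namely $p_\sigma^s(x)\to0$ as $x\uparrow N$, together with $a^2(x)p_\sigma^s(x)\to0$ at both endpoints, which follows from the factors $x^2$ and $e^{-c_0 x/(N-x)}$ in \eqref{17}) one gets $J\equiv0$, i.e. $b\,p_\sigma^s=\tfrac12(a^2p_\sigma^s)'$. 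Integrating this identity over $(0,N)$, the left-hand side equals $(\beta N-\mu-\gamma)\,{\rm E}[I]-\beta\,{\rm E}[I^2]$ while the right-hand side is a total derivative with vanishing boundary terms, whence $(\beta N-\mu-\gamma)\,{\rm E}[I]=\beta\,{\rm E}[I^2]$. Since $\frac{\beta N-\mu-\gamma}{\beta}=N-\frac{\mu+\gamma}{\beta}=\big(1-\frac1{R_0^D}\big)N=I^*$, this reads ${\rm E}[I^2]=I^*\,{\rm E}[I]$, and hence ${\rm Var}[I]={\rm E}[I^2]-({\rm E}[I])^2={\rm E}[I]\big(I^*-{\rm E}[I]\big)$. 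Thus the variance \eqref{24} reduces entirely to the mean \eqref{6}.

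It then remains to show ${\rm E}[I]=I^*(\sigma)$. Here I would compute directly from \eqref{17} using the substitution $z=\frac{x}{N-x}$ (so $x=\frac{Nz}{1+z}$, $N-x=\frac{N}{1+z}$), which linearizes the exponent and turns the measure into $p_\sigma^s(x)\,{\rm d}x=C\,z^{a-1}(1+z)^2e^{-c_0 z}\,{\rm d}z$ on $(0,\infty)$, with $a=c_0(R_0^S-1)$. Then ${\rm E}[I]=CN\int_0^\infty z^{a}(1+z)e^{-c_0 z}\,{\rm d}z$ evaluates through $\int_0^\infty z^{s}e^{-c_0 z}\,{\rm d}z=\Gamma(s+1)c_0^{-(s+1)}$ and the recurrence $\Gamma(s+1)=s\Gamma(s)$; the normalizing constant $C$ from \eqref{29} cancels the surviving $\Gamma(a)$ and leaves ${\rm E}[I]=N\frac{(R_0^S-1)(R_0^S+c_0^{-1})}{(R_0^S)^2+c_0^{-1}(R_0^S-1)}$. (As a consistency check one may note that the same substitution makes ${\rm E}[I^2]=CN^2\int_0^\infty z^{a+1}e^{-c_0 z}\,{\rm d}z$, the factor $(1+z)^2$ cancelling against the $(1+z)^{-2}$ from $x^2$, and this recovers ${\rm E}[I^2]=I^*\,{\rm E}[I]$ independently of the flux argument.)

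The main obstacle is purely algebraic: collapsing the Gamma-function expression for ${\rm E}[I]$ into the advertised closed form $I^*(\sigma)=\big(1-\frac{1}{R_0^D+1-R_0^D/R_0^S}\big)N$. The key is to eliminate $c_0$ in favor of the reproduction numbers via $c_0^{-1}=R_0^D-R_0^S$ (equivalently $R_0^S+c_0^{-1}=R_0^D$), which turns the numerator into $(R_0^S-1)R_0^D$ and the denominator into $R_0^S(R_0^D+1)-R_0^D$; recognizing $R_0^D+1-R_0^D/R_0^S=\frac{R_0^S(R_0^D+1)-R_0^D}{R_0^S}$ then gives ${\rm E}[I]=I^*(\sigma)$ exactly. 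A secondary technical point, which I would state rather than skip, is the justification that all boundary terms vanish and that the first two moments are finite; both follow from the endpoint asymptotics of \eqref{17} recorded in Theorem~\ref{th5} (integrability near $0$ uses $a>0$, i.e. $R_0^S>1$, and the decay near $N$ uses the essential singularity $e^{-c_0 x/(N-x)}$).
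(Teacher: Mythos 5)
Your proposal is correct, but it takes a genuinely different route from the paper for the mean. The first half coincides with the paper: the paper also integrates the zero-flux identity $x[\beta N-\mu-\gamma-\beta x]p_{\sigma}^s-\tfrac12\sigma^2\big(x^2(N-x)^2p_{\sigma}^s\big)'=0$ over $(0,N)$ to obtain $(\beta N-\mu-\gamma)\,{\rm E}[I]=\beta\,{\rm E}[I^2]$, i.e.\ ${\rm E}[I^2]=I^*{\rm E}[I]$, which is exactly your variance reduction. The divergence is in how ${\rm E}[I]$ is pinned down: the paper never touches the explicit formula \eqref{17} beyond its boundary decay; instead it multiplies the flux identity by $1/x$, integrates by parts to get a \emph{second} linear relation, $(\beta N-\mu-\gamma)-\beta\,{\rm E}[I]-\tfrac12\sigma^2\big(N^2-2N{\rm E}[I]+{\rm E}[I^2]\big)=0$, and solves the resulting $2\times2$ linear system for the two moments. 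You instead compute ${\rm E}[I]$ by brute force from \eqref{17} via the substitution $z=x/(N-x)$ and Gamma-function identities, using the normalizing constant \eqref{29}. Both are valid (I checked your Gamma computation: it yields ${\rm E}[I]=N(R_0^S-1)R_0^D/\big(R_0^S(R_0^D+1)-R_0^D\big)$, which is indeed $I^*(\sigma)$ after eliminating $c_0$ via $c_0^{-1}=R_0^D-R_0^S$, and the paper's linear system gives the same expression). The trade-off: the paper's route is computation-light and independent of the value of $C$, needing only the stationary equation, normalization, and endpoint decay; your route is heavier algebraically but is self-contained given \eqref{17}, simultaneously verifies the normalizing constant \eqref{29}, and gives the relation ${\rm E}[I^2]=I^*{\rm E}[I]$ a second, independent derivation.

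One small point to tighten: your claim that the constant probability current $J$ vanishes does not quite follow from "$p_{\sigma}^s\to0$ at $N$ and $a^2p_{\sigma}^s\to0$ at both endpoints" alone; you need the derivative term as well. The clean fix is to evaluate the constant $J$ in the limit $x\downarrow0$ using the asymptotics of \eqref{17}: since $p_{\sigma}^s(x)\sim{\rm const}\cdot x^{c_0(R_0^S-1)-1}$ with $c_0(R_0^S-1)>0$, both $b(x)p_{\sigma}^s(x)$ and $\big(a^2p_{\sigma}^s\big)'(x)$ are $O\big(x^{c_0(R_0^S-1)}\big)\to0$, whence $J=0$. (Alternatively, zero flux is automatic from the construction of \eqref{17} as the solution of the first-order ODE \eqref{21} transported back to $(0,N)$.) The paper asserts the zero-flux equation \eqref{25} with no more justification than you give, so this is a shared, easily repaired omission rather than a defect of your approach.
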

\newpage
\begin{figure}
\centering
    \subfigure[$R_0^P<1$ and $R_0^P\leqslant\frac{4(\sqrt{c_0}-1)}{c_0}.$]{\includegraphics[width=2in,height=1.2in]{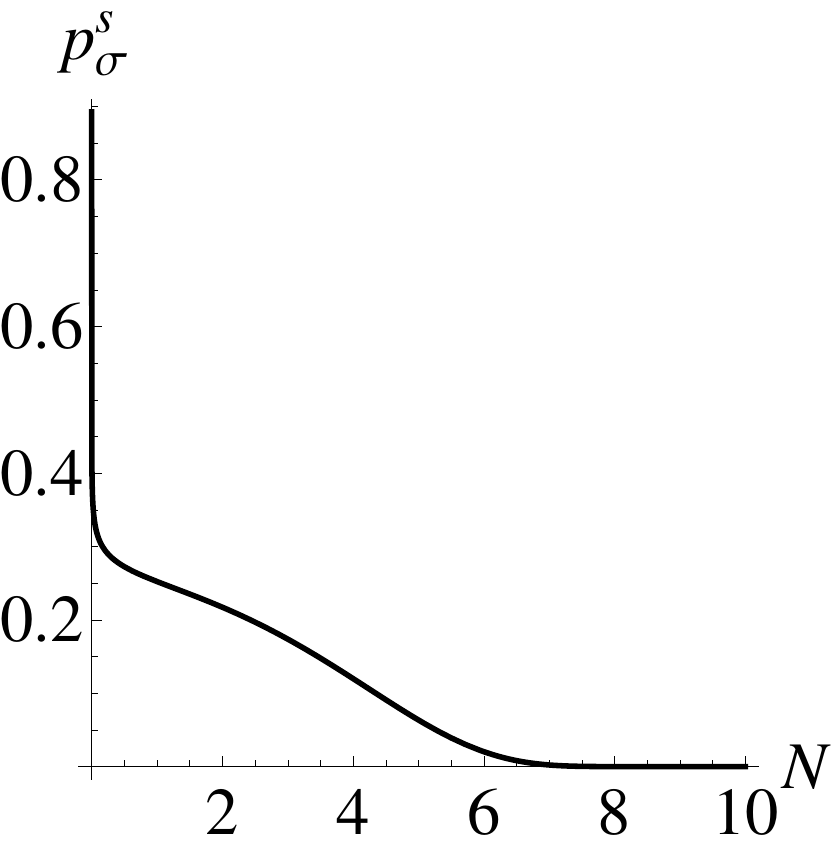}} \hspace{1cm}
    \subfigure[$\frac{4(\sqrt{c_0}-1)}{c_0}<R_0^P<1.$]{\includegraphics[width=2in,height=1.2in]{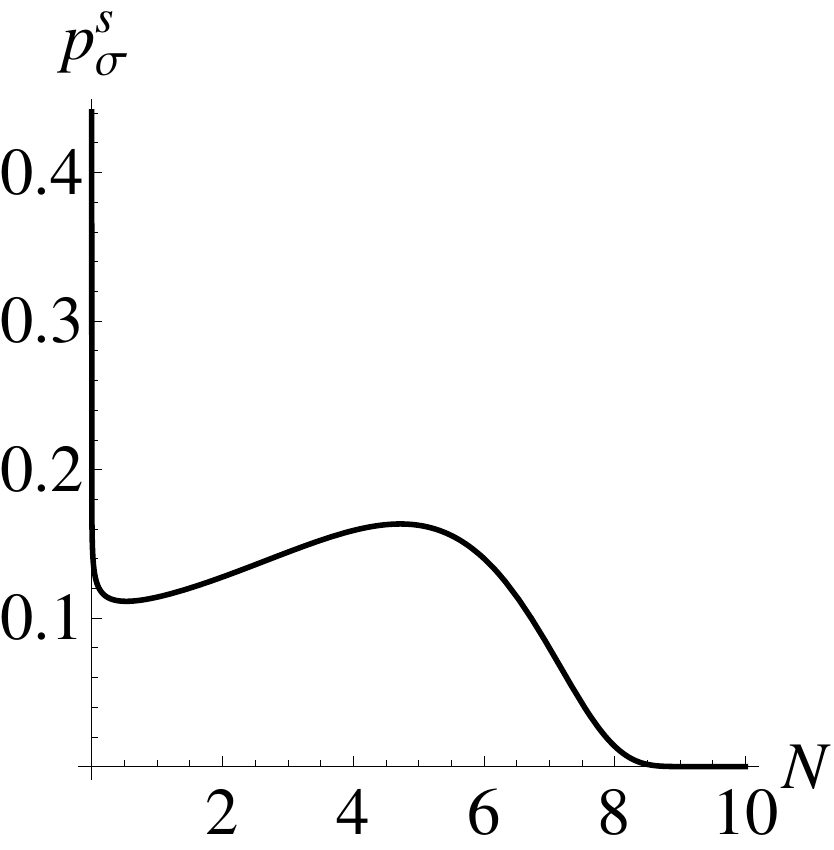}} \\
    \subfigure[$R_0^P=1\leqslant\frac{c_0}{4}.$]{\includegraphics[width=2in,height=1.2in]{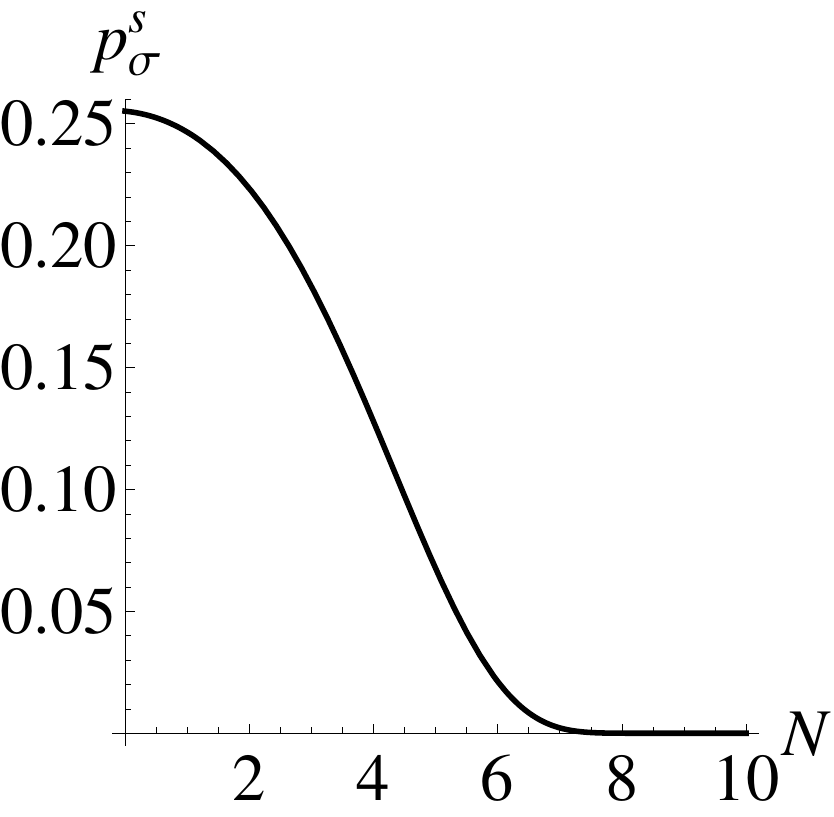}} \hspace{1cm}
    \subfigure[$R_0^P=1>\frac{c_0}{4}.$ ]{\includegraphics[width=2in,height=1.2in]{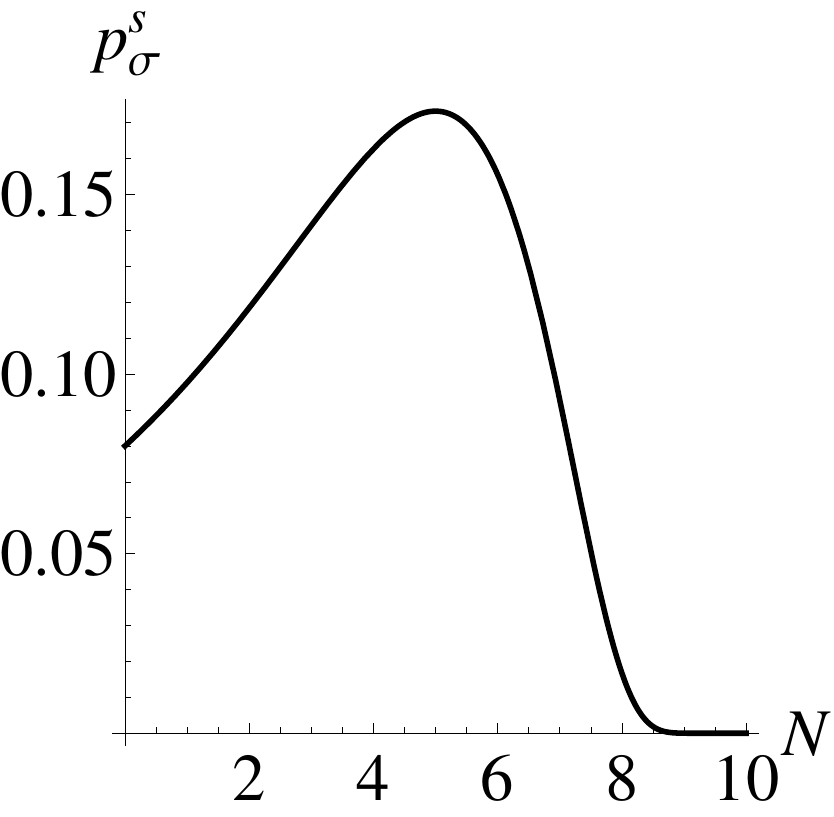}} \\
    \subfigure[$R_0^P>1.$]{\includegraphics[width=2in,height=1.2in]{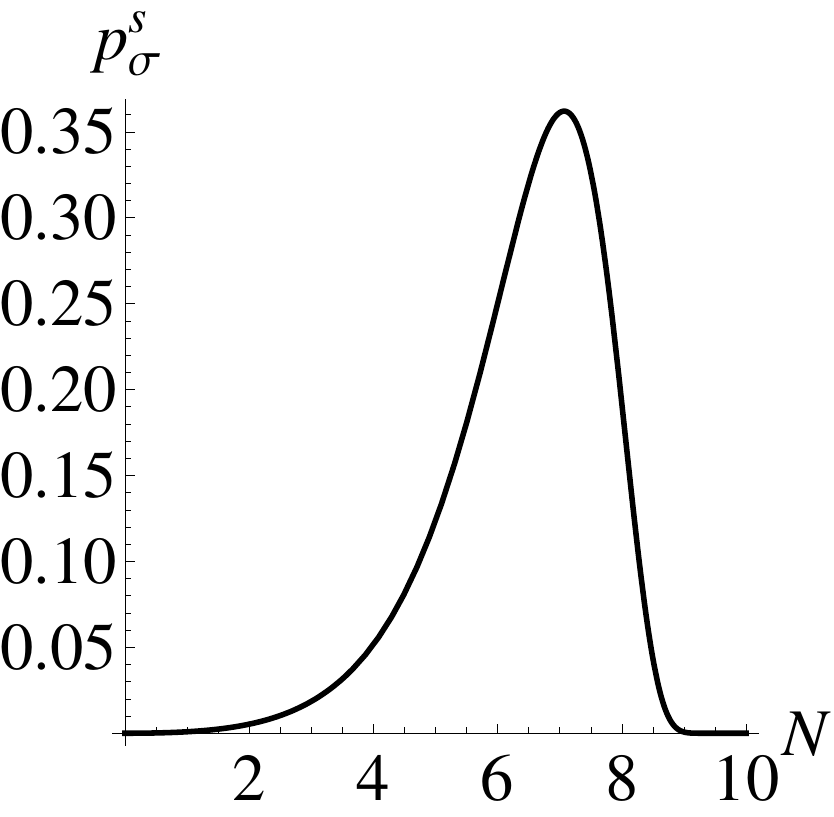}}
\caption{\small Profiles of invariant density $p^s_{\sigma}$ in different cases.} \label{fig1}
\end{figure}

\vskip 0.2cm
\begin{proof}
  It follows from \eqref{10}  that
\begin{equation}
  \label{25}
  x[\beta N-\mu-\gamma-\beta x]p_{\sigma}^s(x)-\frac{1}{2}\sigma^2\frac{{\rm d}}{{\rm d}x}(x^2(N-x)^2p_{\sigma}^s(x))=0.
\end{equation} Note from \eqref{17} and Theorem~\ref{th5}, \begin{equation}
  \label{q}\lim_{x\uparrow N}p_{\sigma}^s(x)=\lim_{x\downarrow0}xp_{\sigma}^s(x)=0.
\end{equation}
Integrating on both sides of \eqref{25} from $0$ to $N,$ we have \begin{equation*}
  \int_0^Nx[\beta N-\mu-\gamma-\beta x]p_{\sigma}^s(x){\rm d}x=0,
\end{equation*}
which implies
\begin{equation}
  \label{26}
  (\beta N-\mu-\gamma)\int_0^Nxp_{\sigma}^s(x){\rm d}x=\beta\int_0^Nx^2p_{\sigma}^s(x){\rm d}x.
\end{equation}
Multiplying $\displaystyle\frac{1}{x}$ on both sides of \eqref{25} and integrating from $0$ to $N,$ we have
\begin{equation}\label{27}
  \int_0^N[\beta N-\mu-\gamma-\beta x]p_{\sigma}^s{\rm d}x-\frac{1}{2}\sigma^2\bigg[x(N-x)^2p_{\sigma}^s(x)\Big|_0^N+\int_0^N(N-x)^2p_{\sigma}^s(x){\rm d}x\bigg]=0.
\end{equation} Combining \eqref{q}-\eqref{27}, we have
$${\rm E}[I]=\int_0^Nxp_{\sigma}^s{\rm d}x=I^*(\sigma)$$ and $${\rm Var}[I]=\int_0^Nx^2p_{\sigma}^s{\rm d}x-\Big(\int_0^Nxp_{\sigma}^s{\rm d}x\Big)^2=(I^*-I^*(\sigma))I^*(\sigma).$$
\end{proof}
From Theorem~\ref{th5}, it follows the persistence threshold theorem with respect to disease-extinction with large probability and disease-persistence with large probability.
\vskip 0.2cm
\begin{theorem} [\textbf{Persistence Threshold Theorem}]\label{th6}
   Assume $R_0^S>1.$
  \begin{enumerate}
    \item[{\rm(i)}] If $R_0^P<1$ or $R_0^P=1\leqslant\displaystyle\frac{c_0}{4},$ then \eqref{1} is \textit{disease-extinct} with large probability.
    \item[{\rm(ii)}] If $R_0^P>1$ or $R_0^P=1>\displaystyle\frac{c_0}{4},$ then \eqref{1} is \textit{disease-persistent} with large probability.
  \end{enumerate}
\end{theorem}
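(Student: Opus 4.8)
The plan is to read off the location of the peak (the mode) of the invariant density $p_\sigma^s$ directly from the monotonicity and boundary information already supplied by Theorem~\ref{th5}, and then to match each parameter regime to the definitions of \emph{disease-extinction} and \emph{disease-persistence with large probability}. Recall that, by definition, model \eqref{1} is disease-persistent (resp.\ disease-extinct) with large probability precisely when the global maximum of $p_\sigma^s$ is attained at a positive value of $x$ (resp.\ at $x=0$). Since Theorem~\ref{th5} already records, case by case, both the one-sided limits $\lim_{x\downarrow0}p_\sigma^s(x)$ and $\lim_{x\uparrow N}p_\sigma^s(x)$ and the intervals of monotonicity, the whole argument should reduce to locating the global maximum in each of the four regimes; I expect no new estimate to be needed.

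For part (ii) the identification is immediate. If $R_0^P>1$, I would invoke Theorem~\ref{th5}(3), which gives $\lim_{x\downarrow0}p_\sigma^s(x)=0$ together with $p_\sigma^s$ increasing on $(0,I_*(\sigma))$ and decreasing on $(I_*(\sigma),N)$; hence the unique peak sits at $I_*(\sigma)>0$ and the model is disease-persistent. If $R_0^P=1$ with $R_0^P=1>\tfrac{c_0}{4}$ (equivalently $c_0<4$), Theorem~\ref{th5}(2)(b) shows that $p_\sigma^s$ increases on $\big(0,(1-\tfrac{c_0}{4})N\big)$ and decreases thereafter, so the peak is at $(1-\tfrac{c_0}{4})N>0$; again disease-persistent.

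For part (i), when $R_0^P=1$ and $c_0\geqslant4$ (equivalently $R_0^P=1\leqslant\tfrac{c_0}{4}$), Theorem~\ref{th5}(2)(a) gives that $p_\sigma^s$ is strictly decreasing on $(0,N)$ with finite limit $C/N$ as $x\downarrow0$, so its supremum is the value $C/N$ approached at the left endpoint and the mode is $0$: disease-extinct. When $R_0^P<1$, Theorem~\ref{th5}(1) supplies the crucial boundary blow-up $\lim_{x\downarrow0}p_\sigma^s(x)=\infty$. In subcase (a), $p_\sigma^s$ is monotone decreasing, so the peak is unambiguously at $0$. The one delicate point, which I expect to be the main obstacle, is subcase (b), where $p_\sigma^s$ is decreasing on $(0,I_-)$, increasing on $(I_-,I_+)$ and decreasing on $(I_+,N)$: here there is a genuine \emph{interior local} maximum at $I_+$, so I must argue that it is not the global maximum. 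This follows because $p_\sigma^s(I_+)$ is finite while $p_\sigma^s(x)\to\infty$ as $x\downarrow0$, whence $\sup_{x\in(0,N)}p_\sigma^s(x)=\infty$ is not attained at any interior point and the mode is $0$; the model is therefore still disease-extinct. Collecting the four regimes then yields exactly the dichotomy (i)--(ii), completing the argument.
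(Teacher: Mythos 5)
Your proposal is correct and takes essentially the same route as the paper: the paper derives Theorem~\ref{th6} directly from the monotonicity and boundary limits recorded in Theorem~\ref{th5}, identifying the mode of $p_{\sigma}^s$ in each parameter regime exactly as you do. Your treatment of the delicate subcase $\frac{4(\sqrt{c_0}-1)}{c_0}<R_0^P<1$ --- where the interior local maximum at $I_+$ is dominated by the blow-up $\lim_{x\downarrow0}p_{\sigma}^s(x)=\infty$, so the mode is still $0$ --- is precisely the point the paper leaves implicit, and you resolve it correctly.
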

For $R_0^S>1,$ even there exists a stationary distribution as proved in \cite{G}, the chance for the disease to go extinct may still be very large (or the chance for the disease to persist may still be tiny) if $R_0^P<1$ or $R_0^P=1\leqslant\displaystyle\frac{c_0}{4}.$ The disease will persist with large probability  only when $R_0^P>1$ or $R_0^P=1>\displaystyle\frac{c_0}{4}.$ Such result interprets differently from deterministic epidemic models in that for $R_0^D>1,$ the disease is endemic and there is no chance for the disease to vanish. However, in the stochastic epidemic model, even when the stochastic basic reproduction number $R_0^S>1,$ there is still chance for the disease to go extinct. Model \eqref{0} is also different from stochastic epidemic models in term of a finite Markov chain, which concludes that disease goes extinct with probability one regardless of the basic reproduction number \cite{A}.

From Theorem~\ref{th6}, for $R_0^P>1,$ the mode of the stationary distribution is located at $I_*(\sigma).$ We define the \textit{stochastic disease prevalence} for the SDE SIS model \eqref{0} by $$\displaystyle\frac{I_*(\sigma)}{N}=\frac{1}{8}\bigg[4-R_0^Pc_0+\sqrt{(4+R_0^Pc_0)^2-16c_0}\bigg],$$ a number independent of the total population size $N.$ Recall that for the deterministic model, the disease prevalence is $\displaystyle\frac{I^*}{N}=1-\frac{1}{R_0^D},$ where $R_0^D=\displaystyle\frac{\beta  N}{\mu+\gamma}$ is the deterministic basic reproduction number and $I^*=N\bigg(1-\displaystyle\frac{1}{R_0^D}\bigg).$ In the following, we compare the two numbers for $R_0^P>1.$
\vskip 0.2cm
\begin{figure}
\centering
    \includegraphics[width=2in,height=1.2in]{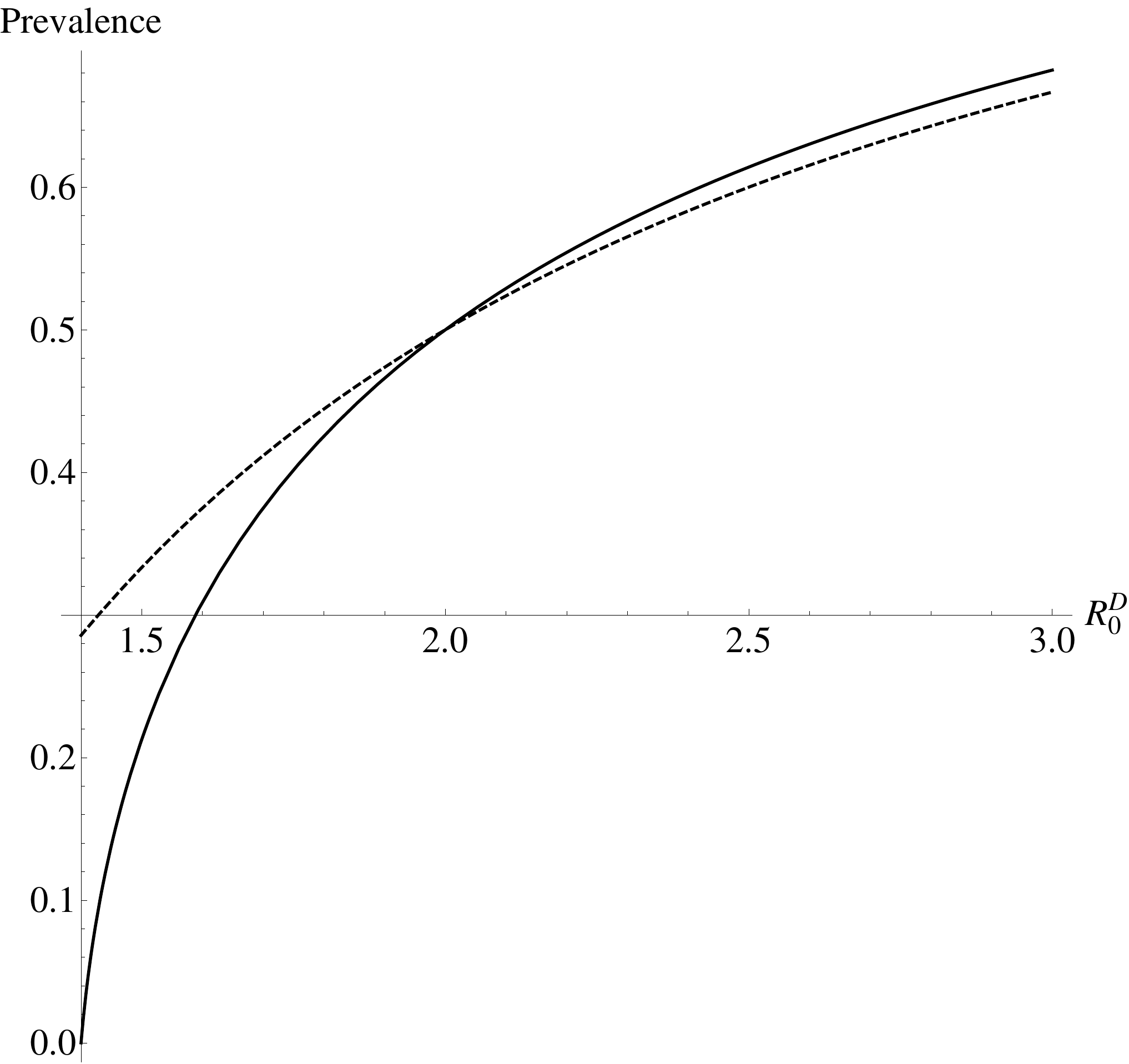}
\caption{\small Disease prevalence. The dashed line and the solid line stand for the deterministic disease prevalence curve  and the stochastic disease  prevalence curve, respectively. The two curves intersect at $R_0^D=2.$} \label{fig2}
\end{figure}
\begin{theorem}[\textbf{Disease Prevalence}]\label{th12} Assume $R_0^P>1.$
\begin{enumerate}
  \item[{\rm (1)}] If $R_0^D<2,$ then  $$\displaystyle\frac{I_*(\sigma)}{N}<\frac{I^*}{N},$$ i.e., the stochastic prevalence is smaller than the deterministic prevalence.
  \item[{\rm (2)}] If $R_0^D=2,$ then $$\displaystyle\frac{I_*(\sigma)}{N}=\frac{I^*}{N},$$ i.e., the stochastic prevalence equals the deterministic prevalence.
  \item[{\rm (3)}] If $R_0^D>2,$ then $$\displaystyle\frac{I_*(\sigma)}{N}>\frac{I^*}{N},$$ i.e., the stochastic prevalence is bigger than the deterministic prevalence.
\end{enumerate}
   \end{theorem}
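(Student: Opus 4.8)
The plan is to avoid wrestling with the square root in formula \eqref{-} and instead exploit that $I_*(\sigma)$ is characterized, in the proof of Theorem~\ref{th5}, as a root of the quadratic coming from $h_\sigma'=0$. Writing $y:=I_*(\sigma)/N$ for the stochastic prevalence and $z:=I^*/N=1-1/R_0^D$ for the deterministic prevalence, the three claimed comparisons amount to determining the sign of $y-z$, which I would reduce to evaluating one quadratic at the single point $z$.

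First I would record the algebraic relations tying the three reproduction numbers together. From $R_0^D=\beta N/(\mu+\gamma)$, $R_0^P=(\beta N-\sigma^2N^2)/(\mu+\gamma)$, and $c_0=2(\mu+\gamma)/(\sigma^2N^2)$ one gets $R_0^D=R_0^P+2/c_0$, equivalently
$$R_0^Pc_0=R_0^Dc_0-2.$$
This single identity is what will make the final computation collapse. Next, from the formula for $h_\sigma'$ in the proof of Theorem~\ref{th5}, $y$ is the positive root of
$$Q(t):=4t^2-(4-R_0^Pc_0)\,t-(R_0^P-1)c_0=0.$$
Since $R_0^P>1$, the product of the roots, $-(R_0^P-1)c_0/4$, is negative, so the roots satisfy $t_-<0<t_+=y$; and since $R_0^D>R_0^P>1$ we have $z=1-1/R_0^D\in(0,1)$, in particular $z>0>t_-$. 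Because $Q$ is an upward parabola, $Q(z)<0$ holds exactly when $t_-<z<y$, which, using $z>t_-$, is exactly when $z<y$; likewise $Q(z)>0$ iff $z>y$ and $Q(z)=0$ iff $z=y$. Hence $\mathrm{sign}(y-z)=-\,\mathrm{sign}\,Q(z)$, and the whole theorem reduces to the sign of $Q(z)$.

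The main (and essentially the only) computation is then to substitute $z=1-1/R_0^D$ into $Q$ and simplify using $R_0^Pc_0=R_0^Dc_0-2$. Working over the common denominator $(R_0^D)^2$, I expect all cubic and quadratic terms in $R_0^D$, together with every $c_0$-dependent term, to cancel, leaving the clean factorization
$$Q(z)=-\frac{2\,(R_0^D-2)}{(R_0^D)^2}.$$
Granting this, the three cases follow at once: $R_0^D<2$ gives $Q(z)>0$, hence $y<z$ (stochastic prevalence smaller); $R_0^D=2$ gives $Q(z)=0$, hence $y=z$; and $R_0^D>2$ gives $Q(z)<0$, hence $y>z$ (stochastic prevalence larger).

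The delicate points are few but worth flagging. The main obstacle is simply verifying that the promised cancellation in $Q(z)$ is exact; this is a routine but slightly lengthy polynomial expansion, and the whole elegance of the argument rides on it producing the factor $R_0^D-2$. The secondary care needed is the sign bookkeeping for the parabola: one must confirm $t_-<0\le z$ so that $Q(z)<0$ is genuinely equivalent to $z<y$ (and not to $z<t_-$), which is why I isolate the observations $R_0^P>1\Rightarrow t_-<0$ and $R_0^D>1\Rightarrow z>0$ before reading off the sign. An alternative route—isolating the square root in \eqref{-}, comparing with $1-1/R_0^D$, and squaring—would reach the same conclusion, but the quadratic-evaluation approach above is cleaner and sidesteps tracking the sign conditions introduced by squaring.
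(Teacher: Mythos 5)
Your proposal is correct, and I verified the key cancellation: with $R_0^Pc_0=R_0^Dc_0-2$ and $z=1-1/R_0^D$, one indeed gets $(R_0^D)^2\,Q(z)=4(R_0^D-1)^2-(6-c_0R_0^D)(R_0^D-1)R_0^D-(c_0R_0^D-2-c_0)(R_0^D)^2=-2(R_0^D-2)$, so $Q(z)=-2(R_0^D-2)/(R_0^D)^2$ exactly as you predicted. Your route is genuinely different from the paper's. The paper works directly with the closed-form expression \eqref{-}: it rewrites $I_*(\sigma)/N=\frac{1}{8}\bigl[6-c_0R_0^D+\sqrt{(c_0R_0^D+2)^2-16c_0}\bigr]$, reduces the comparison to the square-root inequality \eqref{4}, verifies that its right-hand side $2-8/R_0^D+c_0R_0^D$ is positive (this step is essential so that squaring preserves the equivalence, and it is where the hypothesis $R_0^P>1$ enters), then squares and simplifies to the factor $32(R_0^D-2)/(R_0^D)^2$. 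You instead never touch the square root: you characterize $y=I_*(\sigma)/N$ as the positive root of the quadratic $Q$ coming from $h_\sigma'=0$ and read off $\mathrm{sign}(y-z)=-\mathrm{sign}\,Q(z)$ from the parabola's geometry. The trade is transparent: your approach replaces the positivity-before-squaring check with the root-location bookkeeping ($t_-<0<y$ from the negative product of roots when $R_0^P>1$, and $z>0$ from $R_0^D>1$), and it yields all three cases at once from a single evaluation, whereas the paper's argument establishes the equivalence $I_*(\sigma)/N<I^*/N\Leftrightarrow R_0^D<2$ and obtains the equality and reverse cases by the same chain of equivalences. Both arguments pivot on the same identity $c_0R_0^D=c_0R_0^P+2$ and terminate in the same factor $R_0^D-2$; yours is arguably the cleaner bookkeeping since every step is an unconditional equivalence.
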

   \vskip 0.2cm
\begin{proof}
  Equality $c_0R_0^D=c_0R_0^P+2$ implies $$\frac{I_*(\sigma)}{N}=\displaystyle\frac{1}{8}\bigg[6-c_0 R_0^D+\sqrt{(c_0R_0^D+2)^2-16c_0}\bigg].$$
  Notice that $\displaystyle\frac{I_*(\sigma)}{N}<\frac{I^*}{N}$ is equivalent to \begin{equation}
    \sqrt{(c_0R_0^D+2)^2-16c_0}<2-\frac{8}{R_0^D}+c_0R_0^D.\label{4}
  \end{equation}Since $R_0^P=R_0^D-2c_0^{-1}>1,$ we have\[
  \begin{split}
    &2-\frac{8}{R_0^D}+c_0R_0^D\\
    =&\frac{c_0(R_0^D)^2+2R_0^D-8}{R_0^D}\\
    >&\frac{c_0(R_0^D)^2+2R_0^D-4c_0(R_0^D-1)}{R_0^D}\\
    =&\frac{c_0(R_0^D-2)^2+2R_0^D}{R_0^D}>0.
  \end{split}
  \]Hence inequality \eqref{4} is equivalent to $$-16c_0<\displaystyle\frac{64}{(R_0^D)^2}-\frac{16}{R_0^D}(2+c_0R_0^D).$$
  Note that   $$-16c_0-\displaystyle\frac{64}{(R_0^D)^2}+\frac{16}{R_0^D}(2+c_0R_0^D)=\frac{32(R_0^D-2)}{(R_0^D)^2},$$ we have $$\frac{I_*(\sigma)}{N}<\frac{I^*}{N}\Leftrightarrow R_0^D<2.$$
\end{proof}

Figure~\ref{fig2} demonstrates results in Theorem~\ref{th12}.
From Theorem~\ref{th12}, we see that though the randomness in the transmission coefficient stabilizes the disease-free equilibrium  for $R_0^S<1$ and still reduces severity of disease for $1<R_0^P<R_0^D<2,$ it enhances severity for $R_0^D>2.$

In other words, when the deterministic basic reproduction number is small, stochasticity in the transmission rate decreases the prevalence. However, when the deterministic basic reproduction number is big, such stochasticity in fact increases the prevalence.
%  Although $R_0^s>1,$ the peak of the profile of the invariant density can be achieved at $0$ if $R_0^p\leqslant1.$ This means that there is still a large probability that the disease dies out. However, if

Persistence result in Theorem~\ref{th6} is different from that stated in \textit{Theorem~5.1} in \cite{G}, which claims that almost all the sample paths will fluctuate around the level $\tilde{I}_*(\sigma)$  in the sense of inequalities \eqref{p}. Whereas Theorem~\ref{th6} states that it is most likely that the disease persists at the level $I_*(\sigma).$ In other words, the most possible number of infected individuals in the total population at the stationary distribution is $I_*(\sigma).$

%p\newpage
Now we compare $\tilde{I}_*(\sigma)$ and $I_*(\sigma).$
\vskip 0.2cm
\begin{theorem}\label{th4}
   Assume $R_0^P>1.$ Then\begin{enumerate}
     \item[{\rm (a)}] $\tilde{I}_*(\sigma)>I_*(\sigma)$ if $\displaystyle\frac{10}{3}c_0^{-1}<R_0^D<\frac{3}{2}+\frac{2}{3}c_0^{-1};$\\
     \item[{\rm (b)}] $\tilde{I}_*(\sigma)=I_*(\sigma)$ if $R_0^D=\displaystyle\frac{3}{2}+\frac{2}{3}c_0^{-1}<\frac{7}{4};$\\
     \item[{\rm (c)}] $\tilde{I}_*(\sigma)<I_*(\sigma)$ if $1+2c_0^{-1}<R_0^D\leqslant\displaystyle\frac{10}{3}c_0^{-1}.$
   \end{enumerate}
\end{theorem}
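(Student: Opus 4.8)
The plan is to collapse the three-way comparison into the sign of a single function of the combined variable $t:=c_0R_0^D$, and then to locate the unique value of $t$ at which that sign flips.

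First I would put both quantities on the same footing. Using $\beta=(\mu+\gamma)R_0^D/N$ and $c_0=2(\mu+\gamma)/(\sigma^2N^2)$, the definition of $\tilde I_*(\sigma)$ recalled in Section~\ref{sect1} rewrites as
\[
\frac{\tilde I_*(\sigma)}{N}=1-\frac{t}{2}+\frac12\sqrt{t^2-4c_0},\qquad t:=c_0R_0^D,
\]
while the proof of Theorem~\ref{th12} already supplies $\tfrac{I_*(\sigma)}{N}=\tfrac18\big[6-t+\sqrt{(t+2)^2-16c_0}\big]$. Rationalizing each, I would bring them to the parallel forms
\[
\frac{\tilde I_*(\sigma)}{N}=1-\frac{2c_0}{t+\sqrt{t^2-4c_0}},\qquad
\frac{I_*(\sigma)}{N}=1-\frac{2c_0}{(t+2)+\sqrt{(t+2)^2-16c_0}}.
\]
Since $R_0^P>1$ is equivalent to $t>c_0+2$, on this range both radicands are positive (indeed $(t+2)^2-16c_0>(c_0-4)^2\ge 0$ and $t^2-4c_0>c_0^2+4>0$), so both formulas are legitimate.

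From the parallel forms, $\tilde I_*(\sigma)>I_*(\sigma)$ holds precisely when the denominators satisfy $t+\sqrt{t^2-4c_0}>(t+2)+\sqrt{(t+2)^2-16c_0}$, that is, when
\[
g(t):=\sqrt{t^2-4c_0}-\sqrt{(t+2)^2-16c_0}>2 .
\]
This is the key reduction: the comparison of $\tilde I_*$ and $I_*$ becomes the comparison of $g(t)$ with the constant $2$. I would then solve $g(t)=2$ by isolating one radical and squaring twice. Writing $\sqrt{t^2-4c_0}=2+\sqrt{(t+2)^2-16c_0}$ and squaring once gives $3c_0-t-2=\sqrt{(t+2)^2-16c_0}$; squaring again, the $(t+2)^2$ terms cancel and the equation collapses to the \emph{linear} condition $9c_0-6t+4=0$, i.e. $t=\tfrac32c_0+\tfrac23$, equivalently $R_0^D=\tfrac32+\tfrac23c_0^{-1}$. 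This single root is the only place at which $g-2$ can change sign.

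To finish I would determine the sign of $g-2$ on either side of this critical value, carefully tracking the sign of the intermediate quantity $3c_0-t-2$ so as to discard the extraneous root the squarings may introduce: below the critical value one gets $g(t)>2$ and hence $\tilde I_*>I_*$, above it $g(t)<2$ and hence $\tilde I_*<I_*$, with equality exactly at $t^{*}$. Case (b) is then immediate — the extra hypothesis $\tfrac32+\tfrac23c_0^{-1}<\tfrac74$ is precisely $c_0>\tfrac83$, which is what forces the critical value into the admissible range $R_0^D>1+2c_0^{-1}$ — and cases (a) and (c) follow by checking that each stated interval lies entirely on the correct side of $R_0^D=\tfrac32+\tfrac23c_0^{-1}$ once $R_0^P>1$ is imposed. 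I expect the main obstacle to be exactly this bookkeeping: the two successive squarings must each be justified by a sign condition so that no spurious solution survives, and one must verify that the endpoints $\tfrac{10}{3}c_0^{-1}$ and $1+2c_0^{-1}$ of the stated intervals — though not the tight threshold — genuinely sit on the correct side of the critical value throughout the persistence regime.
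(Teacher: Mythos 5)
Your strategy is correct and rests on the same algebraic core as the paper's proof (pass to the variable $t=c_0R_0^D$, rationalize, square twice, and land on the critical value $t^*=\tfrac32 c_0+\tfrac23$), but the logical organization is genuinely different. The paper manipulates the inequality $\tilde I_*(\sigma)>I_*(\sigma)$ through a single chain of equivalences: one squaring justified by $\tfrac{3c_0R_0^D-2}{4}=\tfrac{3c_0R_0^P+4}{4}>0$, then a conjugate (rationalization) step applied to $\tfrac{2+c_0R_0^D}{4}-\sqrt{\big(\tfrac{2+c_0R_0^D}{4}\big)^2-c_0}$, then a second squaring whose positivity requirement is exactly what produces the endpoint $R_0^D>\tfrac{10}{3}c_0^{-1}$; the two-sided characterization in (a) drops out as an exact equivalence, with no separate sign analysis needed. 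You instead rationalize \emph{first}, putting both quantities in the parallel forms $1-2c_0/\big(t+\sqrt{t^2-4c_0}\big)$ and $1-2c_0/\big((t+2)+\sqrt{(t+2)^2-16c_0}\big)$ (these are correct, as are your radicand bounds on $t>c_0+2$), so the comparison collapses to $g(t)$ versus $2$; you then isolate the unique candidate root $t^*$ and conclude by sign constancy. What your route buys is a cleaner criterion: within the persistence regime the comparison is decided solely by the position of $t$ relative to $t^*$, which in particular shows $\tilde I_*<I_*$ on \emph{all} of $\{t>t^*,\ t>c_0+2\}$ — strictly more than the interval in (c) — and exposes $\tfrac{10}{3}c_0^{-1}$ as a non-sharp bookkeeping endpoint rather than a real threshold. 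What it costs is the sign determination you defer, and that is the only place your sketch is incomplete; it does go through: admissible points below $t^*$ exist only when $t^*>c_0+2$, i.e. $c_0>\tfrac83$ (your observation about hypothesis (b)), and then $g(c_0+2)=\sqrt{c_0^2+4}-|c_0-4|>2$ precisely when $c_0>\tfrac83$, while $g(t)\to-2$ as $t\to\infty$ settles the other side; moreover for admissible $t<t^*$ one has $t<t^*<3c_0-2$ (using $c_0>\tfrac83$), so the intermediate quantity $3c_0-t-2$ is positive, both squarings are reversible there, and $t^*$ is a genuine rather than extraneous root. With those checks written out, your argument recovers (a), (b) and (c) in full.
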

\vskip 0.2cm
\begin{proof}
   Recall that $R_0^P=R_0^D-2c_0^{-1}.$ Rewrite $\tilde{I}_*(\sigma)$ and $I_*(\sigma),$ we have $$\tilde{I}_*(\sigma)=\displaystyle\frac{N}{2}\bigg[\sqrt{(c_0R_0^D)^2-4c_0}-c_0R_0^D+2\bigg]$$ and $$I_*(\sigma)=\displaystyle\frac{N}{2}\Bigg[\sqrt{\bigg(\frac{2+c_0R_0^D}{4}\bigg)^2-c_0}+\frac{3}{2}-\frac{c_0R_0^D}{4}\Bigg].$$ Hence
   $$\tilde{I}_*(\sigma)>I_*(\sigma)
     \Leftrightarrow\sqrt{(c_0R_0^D)^2-4c_0}>\frac{3c_0R_0^D-2}{4}+\sqrt{\bigg(\frac{2+c_0R_0^D}{4}\bigg)^2-c_0}.$$
     Note that $$\frac{3c_0R_0^D-2}{4}=\frac{3c_0R_0^P+4}{4}>0.$$ This implies \[
   \begin{split}
   &\tilde{I}_*(\sigma)>I_*(\sigma)\\
  \Leftrightarrow&\frac{3}{8}(c_0R_0^D)^2+\frac{c_0R_0^D}{2}-\frac{1}{2}-3c_0>2\bigg(\frac{3}{4}c_0R_0^D-\frac{1}{2}\bigg)\sqrt{\bigg(\frac{2+c_0R_0^D}{4}\bigg)^2-c_0}\\
  \Leftrightarrow&3c_0<\frac{3c_0R_0^D-2}{2}\Bigg[\frac{2+c_0R_0^D}{4}-\sqrt{\bigg(\frac{2+c_0R_0^D}{4}\bigg)^2-c_0}\Bigg]\\
  \Leftrightarrow&3c_0<\frac{c_0(3c_0R_0^D-2)/2}{\displaystyle\frac{2+c_0R_0^D}{4}+\sqrt{\bigg(\frac{2+c_0R_0^D}{4}\bigg)^2-c_0}}\\
  \Leftrightarrow&\sqrt{\bigg(\frac{2+c_0R_0^D}{4}\bigg)^2-c_0}<\frac{3c_0R_0^D-10}{12}\\
  \Leftrightarrow&\frac{10}{3}c_0^{-1}<R_0^D<\frac{3}{2}+\frac{2}{3}c_0^{-1}.
   \end{split}
   \] Combining $R_0^P=R_0^D-2c_0^{-1}>1,$ we arrive at the conclusions.
\end{proof}
\begin{figure}
\centering
\subfigure[Disease prevalence surfaces.]{\includegraphics[width=2.2in,height=1.2in]{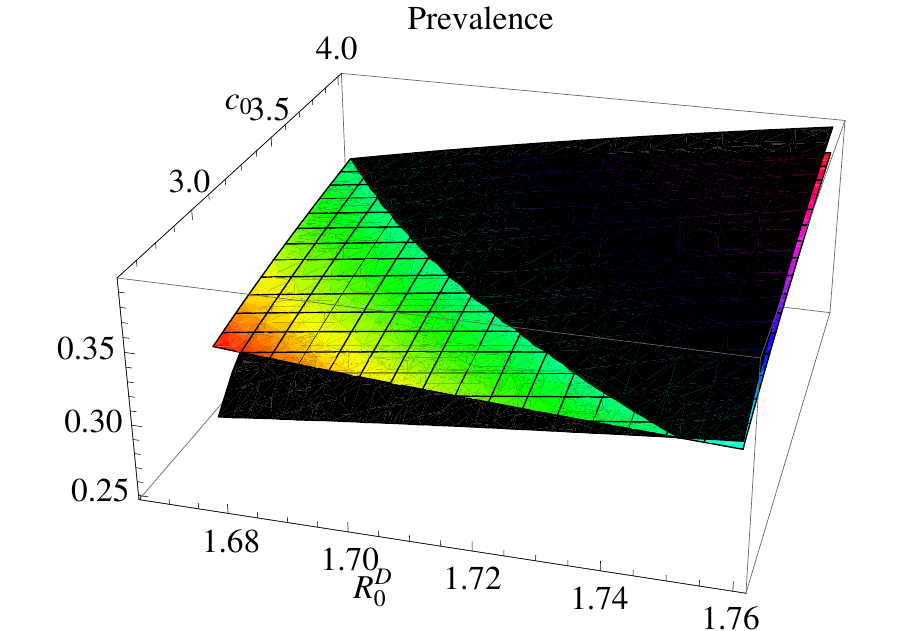}}  \hspace{1cm}
    \subfigure[$c_0=3.$]{\includegraphics[width=2in,height=1.2in]{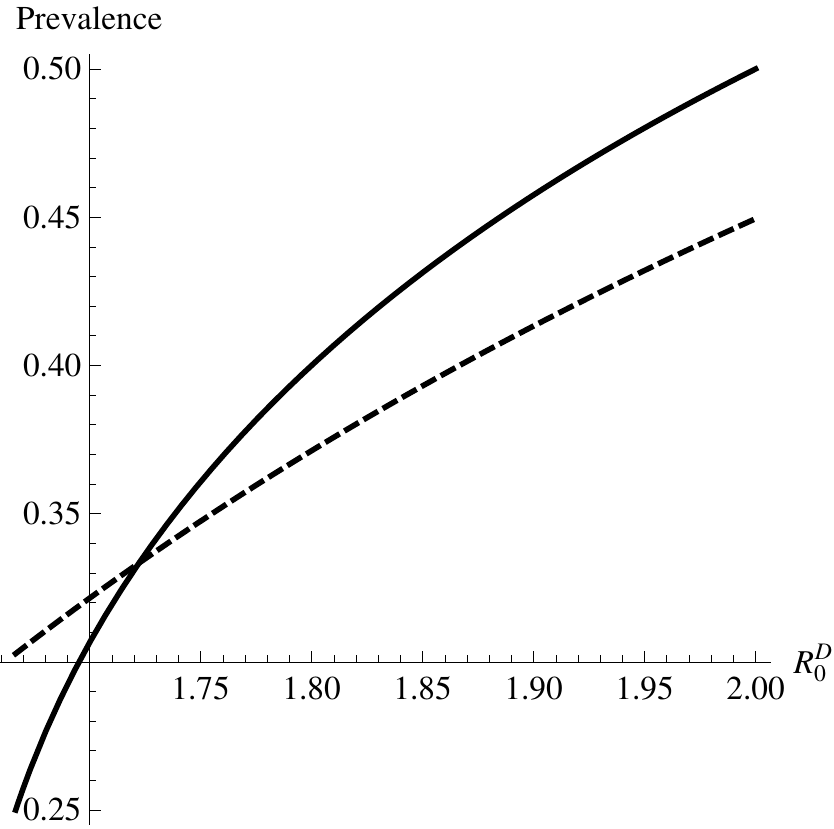}}\\
    \subfigure[$R_0^D=13/8.$]{\includegraphics[width=2in,height=1.2in]{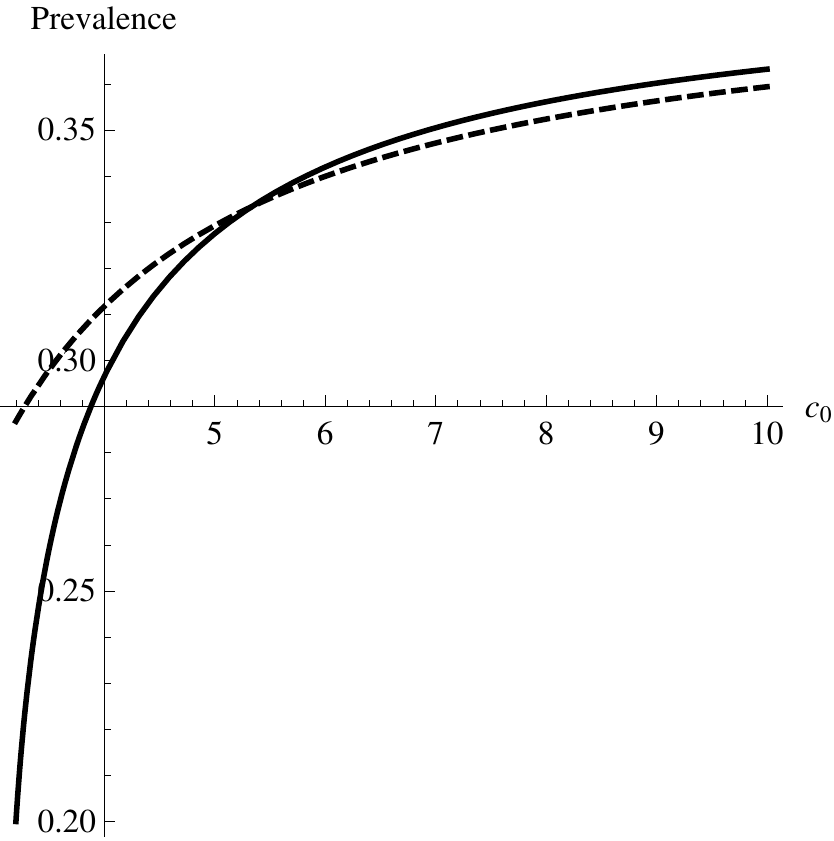}}\hspace{1cm}     \subfigure[$R_0^D=13/8.$]{\includegraphics[width=2in,height=1.2in]{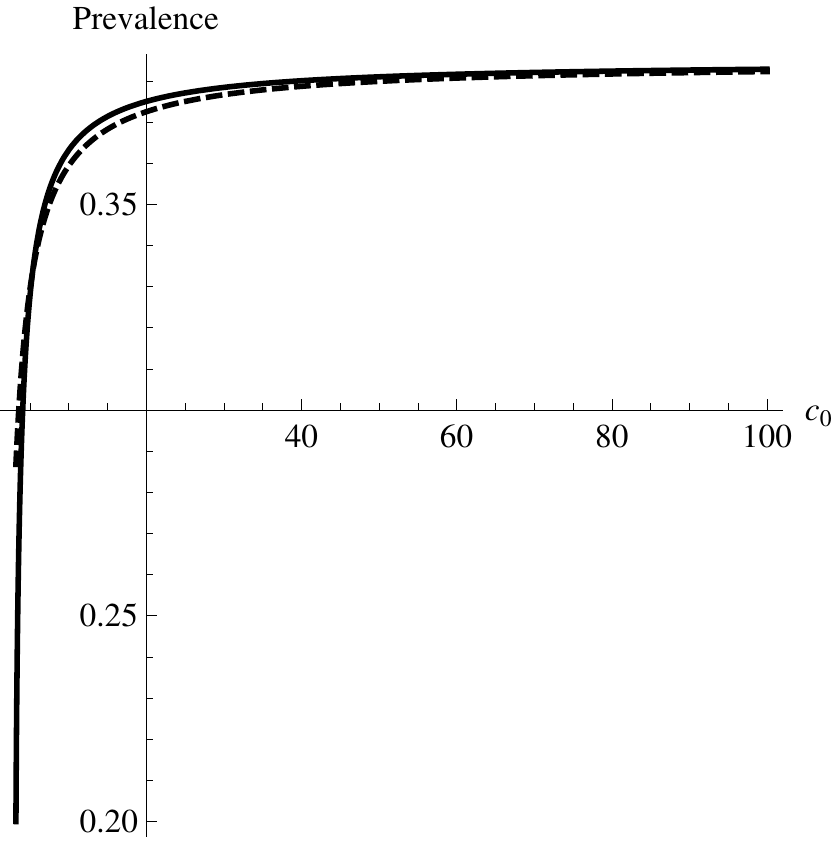}}
\caption{Disease prevalence. In {\rm (a)}, the surface in mixed color stands for $\tilde{I}_*(\sigma)/N$ and the surface in black stands for $I_*(\sigma)/N.$ In {\rm (b)}, {\rm (c)} and {\rm (d)}, the dashed line and the solid line stand for $\tilde{I}_*(\sigma)/N$ and $I_*(\sigma)/N,$ respectively. Two curves almost coincide when $c_0$ is sufficiently large.} \label{fig3}
\end{figure}
Results in Theorem~\ref{th4} are illustrated in Figure~\ref{fig3}. We see that when $c_0$ are sufficiently large (or equivalently, $\sigma$ is sufficiently small, ), $\tilde{I}_*(\sigma)$ and $I_*(\sigma)$ almost coincide. In fact, it is easy to verify that $$\lim_{\sigma\to0}\tilde{I}_*(\sigma)=\lim_{\sigma\to0}I_*(\sigma)=I^*,$$ which is a corollary of Theorem~\ref{th7} in the next section.

%From this result, we know that $I^*(\sigma)<I^*.$ In other words, the \textit{average stochastic disease prevalence} is smaller than the deterministic disease prevalence. This enhances the belief that the disease is less severe than the deterministic model predicts if randomness only occurs in the transmission coefficient.

\section{Limit Stochastic Threshold Dynamics}\label{sect5}

In this section, we further investigate the asymptotic dynamics of \eqref{1} and try to establish a sharp connection between \eqref{1} and its deterministic counterpart \eqref{ode} in term of the limit of the invariant density $p_{\sigma}^s$ as $\sigma\to0.$%Let $R_0^D=\frac{\beta}{b+\gamma}.$
\vskip 0.2cm
\begin{theorem}[\textbf{Limit Stochastic Threshold Theorem}]\hfill
  \label{th7}
  \begin{enumerate}
    \item[{\rm(a).}] If $R_0^D\leqslant1,$ then the number of susceptibles $I$ at DEF converges to $0$ in probability as $\sigma\to0.$
    \item[{\rm(b).}] If $R_0^D>1,$ then the number of susceptibles $I$ at $p_{\sigma}^s$ converges to $I^*$ in probability as $\sigma\to0.$%$p_{\sigma}^s\to\delta_{I^*}.$
  \end{enumerate}
    %In other words, \begin{equation}
  %\lim_{\sigma\to0}F^*_{\sigma}(x)=H_{I^*}(x),\ \forall\ x\in\mathbb{R}^+\backslash\{I^*\},
%\end{equation}where $F_{\sigma}^*(x)=\int_0^xp_{\sigma}^s(y)dy,$ is the distribution function and $H_{I^*}$ is the Heaviside function at $I^*.$
\end{theorem}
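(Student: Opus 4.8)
The plan is to reduce everything to the two elementary identities $c_0=\frac{2(\mu+\gamma)}{\sigma^2N^2}$ and $R_0^S=R_0^D-c_0^{-1}$, so that letting $\sigma\to0$ is the same as letting $c_0\to\infty$, along which $R_0^S\uparrow R_0^D$. This dichotomy decides whether the invariant density $p_\sigma^s$ of Theorem~\ref{th11} exists for small $\sigma$, and it is precisely this that distinguishes the two parts.

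For part (a), I would note that $R_0^D\leqslant1$ forces $R_0^S=R_0^D-c_0^{-1}<R_0^D\leqslant1$ for every $\sigma>0$, so the hypothesis of Theorem~\ref{th1}(1) is met. Hence, for each $\sigma>0$ and every $I_0\in(0,N)$, one has $\mathbb{P}(\lim_{t\to\infty}I(t)=0)=1$: the process is driven to the DFE, where $I=0$. The limiting state is therefore identically $0$ for each $\sigma$, so its convergence to $0$ in probability as $\sigma\to0$ is immediate. The point to flag here is conceptual rather than computational: because $R_0^S<1$ no invariant density exists, so the statement must be read through the almost-sure extinction supplied by Theorem~\ref{th1}(1), not through any stationary law.

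For part (b), $R_0^D>1$ gives $R_0^S=R_0^D-c_0^{-1}\to R_0^D>1$, so $R_0^S>1$ for all sufficiently small $\sigma$ and the density $p_\sigma^s$ of Theorem~\ref{th11} exists; let $I_\sigma$ be distributed according to $p_\sigma^s$. Rather than tracking the increasingly peaked profile of $p_\sigma^s$ as $c_0\to\infty$ directly, I would lean on the closed-form moments from the Mean and Variance Corollary. Using $\frac{R_0^D}{R_0^S}=\frac{R_0^D}{R_0^D-c_0^{-1}}\to1$ one gets $R_0^D+1-\frac{R_0^D}{R_0^S}\to R_0^D$, whence ${\rm E}[I_\sigma]=I^*(\sigma)\to I^*$; and since ${\rm Var}[I_\sigma]=(I^*-I^*(\sigma))I^*(\sigma)$ with $I^*(\sigma)\to I^*$, also ${\rm Var}[I_\sigma]\to0$.

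These two limits are then combined by the second-moment (Chebyshev) bound: for any $\varepsilon>0$,
$$\mathbb{P}(|I_\sigma-I^*|>\varepsilon)\leqslant\frac{{\rm Var}[I_\sigma]+(I^*(\sigma)-I^*)^2}{\varepsilon^2}\longrightarrow0,$$
so $I_\sigma\to I^*$ in probability. All of the arithmetic is routine once the Corollary is granted; the only genuine obstacle is the bookkeeping in part (a) noted above — recognizing that the $\sigma\to0$ limit there is governed by extinction (Theorem~\ref{th1}), not by a vanishing-variance concentration of an invariant density that, in that regime, does not exist.
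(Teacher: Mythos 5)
Your proposal is correct, and part (b) takes a genuinely different route from the paper's. Part (a) coincides with the paper's argument: $R_0^D\leqslant1$ forces $R_0^S<1$ for every $\sigma>0$, Theorem~\ref{th1}(1) gives almost sure extinction, and the random variable at the DFE is identically zero, so the limit is trivial. For part (b), the paper never touches the moments: it works directly with the profile of $p_\sigma^s$ from Theorem~\ref{th5}, bounding the mass of $(0,I_*(\sigma)-\varepsilon/2)$ and of $(I_*(\sigma)+\varepsilon/2,N)$ by multiples of $e^{h_\sigma(I_*(\sigma)\pm\varepsilon/2)-h_\sigma(I_*(\sigma)\pm\varepsilon/4)}$ times the central mass, and then shows these exponents diverge to $-\infty$ as $\sigma\to0$ by computing $h_\sigma''(I_*(\sigma))\to-\infty$ and estimating $h_\sigma'$ near the mode --- a Laplace-type concentration argument around $I_*(\sigma)\to I^*$. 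You instead grant the Mean and Variance Corollary (legitimate: its proof integrates the stationary Fokker--Planck equation and is independent of Theorem~\ref{th7}, so there is no circularity), check that $\mathrm{E}[I_\sigma]=I^*(\sigma)\to I^*$ and $\mathrm{Var}[I_\sigma]=(I^*-I^*(\sigma))I^*(\sigma)\to0$ as $c_0\to\infty$, and finish with the second-moment Chebyshev bound
$$\mathbb{P}(|I_\sigma-I^*|>\varepsilon)\leqslant\frac{\mathrm{Var}[I_\sigma]+(I^*(\sigma)-I^*)^2}{\varepsilon^2}\to0,$$
which is exactly convergence in probability. Your argument is substantially shorter and more elementary, needing only two moments rather than fine control of the density; what the paper's profile argument buys is independence from the closed-form moment formulas (so it would survive in models where such formulas are unavailable) and the sharper qualitative picture that the stationary mass concentrates around the mode $I_*(\sigma)$, the stochastic prevalence level featured in Section 4 --- information the Chebyshev route does not by itself provide.
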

\vskip 0.2cm
\begin{proof} Note for $R_0^D\leqslant1,$ $$R_0^S<1,\ \forall\ \sigma>0.$$ Conclusion (a) trivially follows from Theorem~\ref{th1} for the random variable $I$ at DFE is identically zero.

In the following, we only prove (b).

It suffices to show that for small $\varepsilon>0,$
  \begin{equation}\label{14}
    \int_{I^*-\varepsilon}^{I^*+\varepsilon}p_{\sigma}^s(x){\rm d}x\to1,\ \text{as}\ \sigma\to0.
  \end{equation}
Since \begin{equation}
  \label{31}
  \lim_{\sigma\to0}I_*(\sigma)=I^*,
\end{equation}for sufficiently small $\sigma,$ \begin{equation}
  \label{13}
  |I_*(\sigma)-I^*|<\frac{\varepsilon}{2}.
\end{equation}
This implies \begin{equation}
  \label{30}
  I^*-\varepsilon<I_*(\sigma)-\frac{\varepsilon}{2},\ I^*+\varepsilon>I_*(\sigma)+\frac{\varepsilon}{2}.
\end{equation} Since $R_0^D>1$ and $R_0^P=R_0^D-\displaystyle\frac{\sigma^2N^2}{\mu+\gamma},$ we know that $R_0^P>1$ for all sufficiently small $\sigma^2.$ By Theorem~\ref{th5}, $p_{\sigma}^s$ is increasing in $(0,I_*(\sigma))$ and decreasing in $(I_*(\sigma),N).$ Thus
\[
\begin{split}
 &\int_0^{I_*(\sigma)-\varepsilon/2}p_{\sigma}^s(x){\rm d}x\\<&(I_*(\sigma)-\varepsilon/2)p_{\sigma}^s(I_*(\sigma)-\varepsilon/2)\\
 =&(I_*(\sigma)-\varepsilon/2)p_{\sigma}^s(I_*(\sigma)-\varepsilon/4)e^{h_{\sigma}(I_*(\sigma)-\varepsilon/2)-h_{\sigma}(I_*(\sigma)-\varepsilon/4)}\\
 <&\frac{I_*(\sigma)-\varepsilon/2}{\varepsilon/4}e^{h_{\sigma}(I_*(\sigma)-\varepsilon/2)-h_{\sigma}(I_*(\sigma)-\varepsilon/4)}\int_{I_*(\sigma)-\varepsilon/4}^{I_*(\sigma)}p_{\sigma}^s(x){\rm d}x\\
 <&\frac{I_*(\sigma)-\varepsilon/2}{\varepsilon/4}e^{h_{\sigma}(I_*(\sigma)-\varepsilon/2)-h_{\sigma}(I_*(\sigma)-\varepsilon/4)}\int_{I_*(\sigma)-\varepsilon/2}^{I_*(\sigma)+\varepsilon/2}p_{\sigma}^s(x){\rm d}x.
\end{split}
\]
Similarly, $$\int_{I_*(\sigma)+\varepsilon/2}^Np_{\sigma}^s(x){\rm d}x<\frac{N-I_*(\sigma)-\varepsilon/2}{\varepsilon/4}e^{h_{\sigma}(I_*(\sigma)+\varepsilon/2)-h_{\sigma}(I_*(\sigma)+\varepsilon/4)}\int_{I_*(\sigma)-\varepsilon/2}^{I_*(\sigma)+\varepsilon/2}p_{\sigma}^s(x){\rm d}x.$$
Hence by \eqref{30}, we see\[
\begin{split}
  1=&\int_0^{I_*(\sigma)-\varepsilon/2}p_{\sigma}^s(x){\rm d}x+\int_{I_*(\sigma)+\varepsilon/2}^Np_{\sigma}^s(x){\rm d}x+\int_{I_*(\sigma)-\varepsilon/2}^{I_*(\sigma)+\varepsilon/2}p_{\sigma}^s(x){\rm d}x\\
 <&\Bigg[1+\frac{I_*(\sigma)-\varepsilon/2}{\varepsilon/4}e^{h_{\sigma}(I_*(\sigma)-\varepsilon/2)-h_{\sigma}(I_*(\sigma)-\varepsilon/4)}\\
 &+\frac{N-I_*(\sigma)-\varepsilon/2}{\varepsilon/4}e^{h_{\sigma}(I_*(\sigma)+\varepsilon/2)-h_{\sigma}(I_*(\sigma)+\varepsilon/4)}\Bigg]
 \cdot\int_{I_*(\sigma)-\varepsilon/2}^{I_*(\sigma)+\varepsilon/2}p_{\sigma}^s(x){\rm d}x\\
  <&\Bigg[1+\frac{I_*(\sigma)-\varepsilon/2}{\varepsilon/4}e^{h_{\sigma}(I_*(\sigma)-\varepsilon/2)-h_{\sigma}(I_*(\sigma)-\varepsilon/4)}\\
  &+\frac{N-I_*(\sigma)-\varepsilon/2}{\varepsilon/4}e^{h_{\sigma}(I_*(\sigma)+\varepsilon/2)-h_{\sigma}(I_*(\sigma)+\varepsilon/4)}\Bigg]\cdot\int_{I^*-\varepsilon}^{I^*+\varepsilon}p_{\sigma}^s(x){\rm d}x.
\end{split}
\]
In the following, we only need to show that \[\begin{split}&\frac{I_*(\sigma)-\varepsilon/2}{\varepsilon/4}e^{h_{\sigma}(I_*(\sigma)-\varepsilon/2)-h_{\sigma}(I_*(\sigma)-\varepsilon/4)}\\+&\frac{N-I_*(\sigma)-\varepsilon/2}{\varepsilon/4}e^{h_{\sigma}(I_*(\sigma)+\varepsilon/2)-h_{\sigma}(I_*(\sigma)+\varepsilon/4)}\to0,\ \text{as}\ \sigma\to0.\end{split}\] By \eqref{31}, it suffices to prove that
\begin{equation}
  \label{32}
  h_{\sigma}(I_*(\sigma)-\varepsilon/2)-h_{\sigma}(I_*(\sigma)-\varepsilon/4)\to-\infty
\end{equation}
and
\begin{equation}
  \label{33}
  h_{\sigma}(I_*(\sigma)+\varepsilon/2)-h_{\sigma}(I_*(\sigma)+\varepsilon/4)\to-\infty.
\end{equation}
Straightforward calculations give
$$h_{\sigma}''(I_*(\sigma))=-\frac{N}{I_*(\sigma)(N-I_*(\sigma))^2}\sqrt{(4+R_0^Pc_0)^2-16c_0}.$$ Since\begin{equation}
  \label{34}
c_0\to\infty,\ \text{as}\ \sigma\to0,
\end{equation}which implies
$$\lim_{\sigma\to0}h_{\sigma}''(I_*(\sigma))=-\infty.$$
Hence for small $\varepsilon>0$ and sufficiently small $\sigma>0,$ $h'_{\sigma}$ is decreasing in $(I_*(\sigma)-\varepsilon/2,I_*(\sigma)+\varepsilon/2).$
Thus $$h_{\sigma}(I_*(\sigma)-\varepsilon/2)-h_{\sigma}(I_*(\sigma)-\varepsilon/4)<-\frac{\varepsilon}{4}h'_{\sigma}(I_*(\sigma)-\varepsilon/4).$$ Recall that $$h_{\sigma}'(x)=\frac{-4x^2+(4-R_0^Pc_0)Nx+(R_0^P-1)c_0N^2}{(N-x)^2x}.$$ Substituting $h_{\sigma}'(I_*(\sigma))=0,$ by \eqref{34}, we have
\[
h'_{\sigma}(I_*(\sigma)-\frac{\varepsilon}{4})=\frac{-\varepsilon^2+8\varepsilon I_*(\sigma)+\varepsilon N(R_0^Pc_0-4)}{4[N-(I_*(\sigma)-\varepsilon/4)]^2(I_*(\sigma)-\varepsilon/4)}\to\infty,\ \text{as}\ \sigma\to0.
\]
Now \eqref{32} is proved. Using similar arguments, we can prove \eqref{33}.
\end{proof}

Recall that for deterministic SIS model \eqref{ode}, we have the threshold theorem: the disease-free equilibrium $P_0=(N,0)$ is globally asymptotically stable if $R_0^D\leqslant1,$ and a unique endemic equilibrium $P_*=(N-I^*,I^*)$ is globally asymptotically stable if $R_0^D>1.$ Theorem~\ref{th7} establishes a sharp link between the SDE SIS model and ODE SIS model in term of their threshold dynamics as the randomness in the transmission coefficient vanishes.
\section{Summary}
In this paper, we further study the global dynamics of an SDE SIS epidemic model proposed in \cite{G}. Using Feller's test for explosions of solutions to one dimensional SDE, we establish a stochastic threshold theorem and thus prove the conjecture proposed in \cite{G}. By studying the FPE associated with the SDE, we prove the existence, uniqueness and global asymptotic stability of the invariant density of the FPE. Using the explicit formula for the invariant density, we define the persistence basic reproduction number and give the persistence threshold theorem in term of the invariant density. By comparing the stochastic disease prevalence with the deterministic disease prevalence, we discover that the stochastic  prevalence is bigger than the deterministic  prevalence if the deterministic basic reproduction number $R_0^D>2.$  This shows that the disease is more severe than predicted by the deterministic model if there is randomness in the transmission coefficient when the basic reproduction number is large enough. Finally we investigate the asymptotic dynamics of the SDE SIS model and establish a connection with the dynamics of the deterministic SIS model as the noise vanishes. We expect such global stochastic threshold theorem, persistence threshold theorem and limit stochastic threshold theorem discovered in this simple SIS type model to exist in more complicated epidemic models, for instance the SDE SIR model \cite{LJ,TBV}. We also expect that the approach used in this paper applies to other (high dimensional) SDE biological models, by considering the associated FPE and its invariant density, even though the dynamics may be more complex and there may not exist an explicit formula like \eqref{17} for the invariant density. Nevertheless, Feller's test may not work for  Stochastic Threshold Theorem (like Theorem~\ref{th1}) if the model can not be reduced to a one-dimensional SDE.
%For acknowledgements section, please don't number the section, please begin it with \section*{Acknowledgements}
\section*{Acknowledgments} This research is supported by the financial support of a China Scholarship Council scholarship, University of Alberta Doctoral Recruitment Scholarship, Pundit RD Sharma Memorial Graduate Award, Eoin L. Whitney Scholarship  and Pacific Institute for the Mathematical Sciences(PIMS) Graduate Scholarship. The author is grateful to Professor Michael Yi Li at the University of Alberta and Professor Junjie Wei at Harbin Institute of Technology who introduced the author to mathematical epidemiology. The author is also indebt to two anonymous referees for their careful proofreading and valuable comments and suggestions which greatly enhances the presentation of this paper.

% You may incorporate your references as follows in your main tex file.
% Using BibTex is not recommended but can be handled.

\section*{Appendix}\hfill

%First, we give the following limit with respect to the Brownian motion.
%\begin{lemma}\label{le2-2}
%For $\sigma\in\mathbb{R},$
%\begin{equation}
%    \label{13}
%    \lim_{t\to\infty}\frac{te^{\sigma B(t)}}{\int_0^te^{\sigma B(s)}ds}=1,\ \mbox{a.s.}
%  \end{equation}
%  In particular,
%  \begin{equation}
%    \label{14}
%    \lim_{t\to\infty}\frac{e^{\sigma B(t)}}{\int_0^te^{\sigma B(s)}ds}=0,\ \mbox{a.s.}
%  \end{equation}
%\end{lemma}
%\begin{proof}

%\end{proof}
First, we present the monotonicity of function $g$ defined in \eqref{3} in Section 2.

\vskip 0.2cm
\begin{lemma}\label{le2}
The function $g$ defined in \eqref{3} is strictly increasing in $(0,N).$ Moreover, $$\lim_{\xi\downarrow0}g(\xi)=-\infty,\ \mbox{and}\ \lim_{\xi\uparrow N}g(\xi)=\infty.$$ Hence, its inverse function $g^{-1}$ exists and is strictly increasing in $(-\infty,\infty).$ In addition,  $$\lim_{y\to-\infty}g^{-1}(y)=0,\ \mbox{and}\ \lim_{y\to\infty}g^{-1}(y)=N.$$
\end{lemma}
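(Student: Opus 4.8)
The plan is to treat this as a routine one-variable calculus verification, organized in three short steps: strict monotonicity of $g$, the two endpoint limits, and then the corresponding properties of the inverse. The cleanest route, which makes essentially every assertion simultaneous, is to produce the explicit inverse, but I would first record the direct arguments since they require nothing.

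First I would establish strict monotonicity by differentiating the explicit expression in \eqref{3}. Writing $g(\xi)=\log\xi-\log(N-\xi)$, one computes $g'(\xi)=\frac{1}{\xi}+\frac{1}{N-\xi}$, which is strictly positive throughout $(0,N)$ since both $\xi$ and $N-\xi$ are positive there; this is exactly the $g'$ already recorded in the proof of Theorem~\ref{th1}. Hence $g$ is strictly increasing on $(0,N)$. The two endpoint limits then follow directly from the form $g(\xi)=\log\frac{\xi}{N-\xi}$: as $\xi\downarrow 0$ the argument of the logarithm tends to $0^{+}$, so $g(\xi)\to-\infty$, while as $\xi\uparrow N$ the argument tends to $+\infty$, so $g(\xi)\to+\infty$.

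Finally, since $g$ is continuous and strictly increasing on $(0,N)$ with these two limits, the intermediate value theorem shows that $g$ is a bijection from $(0,N)$ onto all of $\mathbb{R}$; therefore $g^{-1}$ is defined on the whole line and is itself strictly increasing, being the inverse of a strictly increasing continuous function. The limits of $g^{-1}$ then follow by reversing the roles of domain and range, giving $g^{-1}(y)\to 0$ as $y\to-\infty$ and $g^{-1}(y)\to N$ as $y\to+\infty$. The most economical way to confirm all of these claims at once is to solve $y=\log\frac{\xi}{N-\xi}$ explicitly for $\xi$, obtaining $g^{-1}(y)=\frac{Ne^{y}}{1+e^{y}}$, which is manifestly strictly increasing in $y$ and has the stated limits, and which is precisely the substitution $I(t)=g^{-1}(Y(t))$ already used in deriving \eqref{2}. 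There is no genuine obstacle here; the only point requiring a moment's care is verifying that the range of $g$ is the entire real line, so that $g^{-1}$ is defined on all of $\mathbb{R}$, and the explicit inverse settles this immediately.
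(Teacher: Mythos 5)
Your proof is correct. The paper in fact states Lemma~\ref{le2} without any proof, treating it as an elementary fact; your verification is exactly the routine argument the paper implicitly relies on, using the same derivative $g'(\xi)=\frac{1}{\xi}+\frac{1}{N-\xi}$ that appears in the proof of Theorem~\ref{th1} and the same explicit inverse $g^{-1}(y)=\frac{Ne^{y}}{1+e^{y}}$ used there in the derivation of \eqref{2}.
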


Now we give some preliminary results on Feller's test of explosions and FPEs \cite{I,LM}.

The following is a result on Feller's test of explosions \cite{F,KS}. For a complete statement of Feller's test of explosions, we refer the reader to Proposition 5.22 on p. 345 in \cite{KS}.

Consider the following one-dimensional SDE
\begin{equation}\label{19}
\begin{split}
  {\rm d}X(t)=&b(X(t)){\rm d}t+a(X(t)){\rm d}B(t),\\
  X(0)=&X_0
\end{split}\end{equation}
for some $X_0\in\mathbb{R}.$ Assume that
\vskip 0.2cm
\noindent
{\rm(H1)}\ $(a(x))^2>0,\ \forall\ x\in\mathbb{R}.$
\vskip 0.2cm
\noindent
{\rm(H2)}\ $\forall\ x\in\mathbb{R},$ $\exists\ \varepsilon>0$ such that $\displaystyle\int_{x-\varepsilon}^{x+\varepsilon}\frac{1+|b(\xi)|}{\big(a(\xi)\big)^2}{\rm d}\xi<\infty.$

Define the \textit{scale function} $\psi$ by $$\psi(x)=\displaystyle\int_0^x\exp\Bigg(-\int_0^{\xi}\frac{2b(r)}{\big(a(r)\big)^2}{\rm d}r\Bigg){\rm d}\xi,\ x\in\mathbb{R}.$$ Now we give Feller's test \cite{KS}.
\vskip 0.2cm
\begin{lemma}\label{le3}
Assume that {\rm (H1)} and {\rm (H2)} hold. Let $X_0\in\mathbb{R}.$
\begin{enumerate}
\item[{\rm(1)}] If $\psi(-\infty)>-\infty$ and $\psi(\infty)=\infty,$ then $$\mathbb{P}\Big\{\lim_{t\to\infty}X(t)=-\infty\Big\}=1.$$
  \item[{\rm(2)}] If $\psi(-\infty)=-\infty$ and $\psi(\infty)=\infty,$ then $$\mathbb{P}\Big\{\underset{0\leqslant t<\infty}{\sup}X(t)=\infty\Big\}=\mathbb{P}\Big\{\underset{0\leqslant t<\infty}{\inf}X(t)=-\infty\Big\}=1.$$ In particular, the process $X_t$ is recurrent: for every $\xi\in\mathbb{R},$ $$\mathbb{P}\{X(t)=\xi:\ \exists\ t\in[0,\infty)\}=1.$$
\end{enumerate}
\end{lemma}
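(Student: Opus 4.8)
The plan is to pass to the natural scale and reduce everything to the long-time behaviour of a time-changed Brownian motion. First I would set $Z(t):=\psi(X(t))$. Under (H1) and (H2) the scale function $\psi$ is a strictly increasing $C^2$ bijection of $\mathbb{R}$ onto $(\psi(-\infty),\psi(\infty))$, with derivative $\psi'(\xi)=\exp\!\big(-\int_0^\xi 2b(r)/(a(r))^2\,{\rm d}r\big)$ solving $b\psi'+\frac{1}{2}a^2\psi''=0$. Itô's formula then gives ${\rm d}Z(t)=\psi'(X(t))a(X(t))\,{\rm d}B(t)$, so $Z$ is a driftless continuous local martingale on $(\psi(-\infty),\psi(\infty))$. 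Because $\psi$ is a homeomorphism, every assertion about $X$ (hitting a level, $\sup/\inf$, recurrence, convergence to $\pm\infty$) translates into the corresponding assertion for $Z$ and transfers back through $\psi^{-1}$.

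The workhorse is an exit-probability identity. Fixing $a<X_0<b$ with hitting times $T_a,T_b$, I would stop the local martingale $Z$ at $T_a\wedge T_b$ (where it is bounded) and apply optional stopping to obtain $\mathbb{P}\{T_a<T_b\}=[\psi(b)-\psi(X_0)]/[\psi(b)-\psi(a)]$. The two cases then follow by sending $a\downarrow-\infty$ and $b\uparrow+\infty$, using the monotonicity of the events $\{T_a<T_b\}$ to interchange the limit with the probability. In case (2), where $\psi(\pm\infty)=\pm\infty$: fixing $a$ and letting $b\uparrow\infty$ gives $\mathbb{P}\{T_a<T_b\}\to1$, so $X$ reaches every level below $X_0$ before escaping upward, whence $\inf_t X(t)=-\infty$ a.s.; symmetrically $\sup_t X(t)=+\infty$ a.s. Path-continuity plus reaching arbitrarily high and low levels yields that every $\xi\in\mathbb{R}$ is hit, i.e. recurrence. (Here one also needs finite-time explosion excluded so that the $t\to\infty$ statements are meaningful; with $\psi(\pm\infty)=\pm\infty$ both scale boundaries sit at $\pm\infty$ and are unattainable, which rules out explosion.)

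For case (1), where $L:=\psi(-\infty)>-\infty$ and $\psi(\infty)=\infty$, the process $Z$ lives in $(L,\infty)$ and is a local martingale bounded below, so $Z-L$ is a nonnegative supermartingale converging a.s.\ to a finite limit $Z_\infty\ge L$. The remaining task, which I expect to be the main obstacle, is to identify $Z_\infty=L$, equivalently $X(t)\to-\infty$. The idea is that by the Dambis--Dubins--Schwarz representation a continuous local martingale oscillates between $\pm\infty$ on $\{\langle Z\rangle_\infty=\infty\}$; this is impossible since $Z\ge L$, so $\langle Z\rangle_\infty=\int_0^\infty(\psi'(X_s)a(X_s))^2\,{\rm d}s<\infty$ a.s. If $Z_\infty$ were an interior value $z_0>L$, then $X_s\to\psi^{-1}(z_0)$, a finite interior point, and by (H1) the integrand would tend to a strictly positive constant, forcing $\langle Z\rangle_\infty=\infty$ --- a contradiction. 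Hence $Z_\infty=L$ and $X(t)\to-\infty$ a.s. The delicate bookkeeping is precisely this ruling-out of interior convergence, together with justifying the optional-stopping and limit-interchange uniformly as the levels approach the boundaries.
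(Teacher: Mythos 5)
The paper never proves Lemma~\ref{le3}: it is imported as a known result, with the reader referred to Feller \cite{F} and to Proposition~5.22 on p.~345 of Karatzas--Shreve \cite{KS}, and it is used only as a tool in the proof of Theorem~\ref{th1}. So there is no internal proof to compare against; what you have written is, in substance, the standard literature proof of that proposition: pass to natural scale so that $Z=\psi(X)$ is a driftless continuous local martingale, obtain the exit identity $\mathbb{P}\{T_a<T_b\}=[\psi(b)-\psi(X_0)]/[\psi(b)-\psi(a)]$ by optional stopping, send the levels to the boundaries to get case (2) (with recurrence following from path continuity and the intermediate value theorem), and in case (1) combine nonnegative-supermartingale convergence with the Dambis--Dubins--Schwarz oscillation dichotomy to force the limit down to the finite scale boundary. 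That architecture is correct and is exactly how the cited result is proved; reproducing it would simply make the paper's Appendix self-contained.

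Two steps, however, need shoring up because (H1)--(H2) allow merely measurable coefficients. First, $\psi$ is generally not $C^2$: under (H2) the exponent in $\psi'$ is only absolutely continuous, so $b\psi'+\frac{1}{2}a^2\psi''=0$ holds only almost everywhere, and It\^{o}'s formula must be invoked in a generalized (It\^{o}--Krylov/local-time) form, or one must work with weak solutions and time change as in \cite{KS}. Second, and more substantively, your exclusion of an interior limit in case (1) --- ``if $X_s\to\psi^{-1}(z_0)$ then the integrand tends to a strictly positive constant'' --- presumes continuity of $a$ at that point, which (H1)--(H2) do not provide: one can have $a^2>0$ everywhere and $1/a^2$ locally integrable while the essential infimum of $a^2$ over every neighborhood of $x_0$ is zero (e.g.\ $a^2(x)=\min\{1,|x-x_0|^{1/2}\}$ off $x_0$, $a^2(x_0)=1$), so the pointwise lower bound on the quadratic-variation density fails. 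The rigorous form of this step is the Engelbert--Schmidt zero--one law: writing $Z_t=W_{\langle Z\rangle_t}$, convergence of $Z$ to an interior point $z_0$ with $\langle Z\rangle_\infty<\infty$ would force $\big((\psi'a)\circ\psi^{-1}\big)^{-2}$ to be non-integrable near $z_0$, contradicting (H2); the same consideration is what guarantees $\mathbb{P}\{T_a\wedge T_b<\infty\}=1$, which you use implicitly when identifying the limit in the optional-stopping identity. Note also that in case (1) the scale function alone does not exclude explosion to $-\infty$ in finite time, so the conclusion $\lim_{t\to\infty}X(t)=-\infty$ should be read for global (conservative) solutions --- which is how the paper applies it, since conservativeness of $Y_t$ is established separately in Theorem~\ref{th10}. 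None of this changes your outline, and in the paper's application the coefficients are smooth, so your pointwise argument suffices there.
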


Next, we present some basic results on FPE.
Consider the following initial value Cauchy problem:
\begin{equation}
 \label{15} \frac{\partial v}{\partial t}=-\frac{\partial}{\partial x}(b(x)v)+\frac{1}{2}\frac{\partial^2}{\partial x^2}\Big(\big(a(x)\big)^2v\Big),\ \ t>0,\ x\in\mathbb{R},\end{equation}\begin{equation}\label{115}
  v(0,x)=v_0(x),\ x\in\mathbb{R}.
\end{equation}
Eq.\eqref{15} is the {\em Fokker-Planck} associated with \eqref{19}. It is well-known that there is a unique {\em generalized solution} to \eqref{15} provided that there exists a unique classical fundamental solution to \eqref{15} ( for the definitions of fundamental solution and generalized solution, see, for instance, p.365 and p.368 in \cite{LM}).

In the following, we assume there exists a unique classical fundamental solution to \eqref{15} and thus
\begin{theorem} {\rm \cite{LM}}
  \label{th0}
  For every $v_0\in L^1_+(\mathbb{R}),$  there exists a unique generalized solution $v(t,x)$ to \eqref{15}.
\end{theorem}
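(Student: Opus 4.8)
The plan is to circumvent the boundary degeneracy of the Fokker--Planck equation \eqref{10}---whose diffusion coefficient $\sigma x(N-x)$ vanishes at both endpoints of $(0,N)$---by passing to the conjugate process $Y_t=g(I_t)$, with $g$ as in \eqref{3}. Since $g$ is a strictly increasing diffeomorphism of $(0,N)$ onto $\mathbb{R}$ by Lemma~\ref{le2}, and since the SDE \eqref{2} for $Y$ has the constant, strictly positive diffusion coefficient $\sigma N$, the transformed problem is the FPE \eqref{5} of a genuinely non-degenerate, conservative diffusion on all of $\mathbb{R}$. I would first prove existence, uniqueness and global asymptotic stability of an invariant density $u^s_\sigma$ for \eqref{5}, and only then push the result back to $p^s_\sigma$ on $(0,N)$ via the change of variables.

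For the $Y$-problem, the first step is to produce a well-defined Markov semigroup $\{\mathcal{U}(t)\}$ on $L^1_+(\mathbb{R})$: conservativeness of $Y_t$ (Theorem~\ref{th10}) together with non-degeneracy yields a classical fundamental solution of \eqref{5}, so Theorem~\ref{th0} in the Appendix supplies a unique generalized $L^1_+$-solution for each initial density. Uniqueness and global asymptotic stability would then follow from the abstract Lyapunov criterion in Theorem~\ref{th9} in the Appendix: I would take $V(\xi)=e^{-\alpha_0\xi}+\xi^2$ and verify the drift inequalities \eqref{18}. The quadratic piece controls $V$ for large $|\xi|$, while the exponential piece is tailored to the left tail, where the drift in \eqref{2} tends to the constant $\beta N-\mu-\gamma-\tfrac12\sigma^2N^2$; this constant is positive precisely when $R_0^S>1$, which is exactly what makes $\alpha_0>0$ and provides the restoring force preventing escape to $-\infty$. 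To the right, the $-(\mu+\gamma)e^{\xi}$ term dominates and is strongly restoring, so that regime is easy. Finally I would confirm that the function \eqref{16} is an invariant density by checking that it solves the stationary zero-flux first-order reduction \eqref{21} of \eqref{5}, and that it is normalizable: the normalization constant is finite exactly because $\Gamma(c_0(R_0^S-1))<\infty$, i.e. because $c_0(R_0^S-1)>0$ under $R_0^S>1$.

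The last step is to transport everything back to the variable $I\in(0,N)$. Under the diffeomorphism $x=g^{-1}(\xi)$ the invariant measure $\kappa^s_\sigma$ of $Y_t$ pushes forward to an invariant measure $\nu^s_\sigma$ of $I_t$, whose density transforms by the Jacobian, $p^s_\sigma(x)=u^s_\sigma(g(x))\,g'(x)$; substituting \eqref{16} together with $g'(x)=\tfrac{1}{x}+\tfrac{1}{N-x}$ reproduces the explicit formula \eqref{17}. Because $g$ is a measurable bijection, the $L^1$ global asymptotic stability and the ergodic time-average identity transfer verbatim from $\{\mathcal{U}(t)\}$ to $\{\mathcal{P}(t)\}$; this transfer is what Theorem~\ref{th8} in the Appendix is meant to formalize.

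I expect the main obstacle to be the verification of the Lyapunov inequalities \eqref{18} for $V$ with a single choice of constants valid on all of $\mathbb{R}$---in particular, matching the left-tail behaviour, governed by the sign of $\beta N-\mu-\gamma-\tfrac12\sigma^2N^2$ and hence by $R_0^S>1$, against the right-tail exponential growth of the drift, and choosing the constant $\alpha_2$ large enough that both bounds hold simultaneously. The boundary bookkeeping in the pushforward---confirming that the transported measure has no atoms at $0$ or $N$, which follows from $\lim_{x\downarrow 0}x\,p^s_\sigma(x)=0$ and $\lim_{x\uparrow N}p^s_\sigma(x)=0$---is routine but must be recorded to justify that $\nu^s_\sigma$ is genuinely a probability density supported on $(0,N)$.
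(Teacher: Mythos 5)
Your proposal does not prove the statement at hand. Theorem~\ref{th0} is not about invariant densities: it is the well-posedness assertion for the Cauchy problem \eqref{15}--\eqref{115} --- for every initial density $v_0\in L^1_+(\mathbb{R})$ there exists a unique generalized solution --- stated for a \emph{general} one-dimensional Fokker--Planck equation with coefficients $a,b$, under the standing assumption, made in the sentence immediately preceding it, that \eqref{15} admits a unique classical fundamental solution. In the paper this is a quoted result from \cite{LM} (it is what legitimizes the definition of the semigroup $\{\mathcal{V}(t)\}$, and later $\{\mathcal{U}(t)\}$); the substance behind it is to set $v(t,x)=\int_{\mathbb{R}}\Gamma(t,x;y)v_0(y)\,{\rm d}y$ with $\Gamma$ the assumed fundamental solution, to check that $v(t,\cdot)$ is nonnegative and mass-preserving so that $v(t,\cdot)\in L^1_+(\mathbb{R})$, and to verify that any generalized solution --- by definition an $L^1$-limit of classical solutions along approximating initial data --- must coincide with this one. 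Nothing about $R_0^S$, the transformation $g$, Lyapunov functions, or stationary solutions enters: the theorem holds for arbitrary coefficients satisfying the standing hypothesis, and regardless of whether an invariant density exists or is stable.

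What you have sketched is instead the paper's proof of Theorem~\ref{th3} (and, via Theorem~\ref{th8}, of Theorem~\ref{th11}): pass to $Y_t=g(I_t)$ to remove the boundary degeneracy of \eqref{10}, get a fundamental solution for \eqref{5} from non-degeneracy and conservativeness, apply the Lyapunov criterion of Theorem~\ref{th9} with $V(\xi)=e^{-\alpha_0\xi}+\xi^2$, confirm that \eqref{16} solves \eqref{21} and is normalizable via the Gamma function, and push everything back to $(0,N)$. That sketch is faithful to Section~3 of the paper, but as a proof of Theorem~\ref{th0} it is circular: in your own second paragraph you invoke ``Theorem~\ref{th0} in the Appendix'' to supply the unique generalized $L^1_+$-solution, i.e., you use the statement you were asked to prove as an ingredient. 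Moreover, your argument is tied to the specific drift of \eqref{2} and to the condition $R_0^S>1$ (needed for $\alpha_0>0$), whereas Theorem~\ref{th0} carries no such hypothesis and must hold even when no invariant density exists. The missing content, relative to the actual target, is precisely the construction of the solution from the fundamental solution and the uniqueness argument within the class of generalized solutions; for that, the correct move is simply to carry out (or cite, as the paper does) the material around p.~365--370 of \cite{LM}.
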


Eq. \eqref{15} defines the Markov semigroup (also called stochastic semigroup in \cite{LM}) $\{\mathcal{V}(t)\}_{t\geqslant0}$ of operators on $L^1_+(\mathbb{R})$ by
\begin{equation}
  \mathcal{V}(t)v_0(x)=v(t,x),
\end{equation} and the set of all densities $L^1_+(\mathbb{R})$  is invariant under $\mathcal{V}(t)$ for each $t>0.$

For more preliminaries on FPEs, we refer the reader to \cite{LM}.

Now we state a known sufficient condition given on p.742 in \cite{MLL} for the existence, uniqueness and global asymptotic stability of an invariant density (see also Theorem~2 and Theorem~4 on p.154 in \cite{I} and Theorem~11.9.1 on p.372 in \cite{LM}).

We define a {\em Lyapunov function} $V:\mathbb{R}\to\mathbb{R}$ as a $C^2$ function with the
following properties \cite{MLL}:
\begin{enumerate}
  \item[(P1)] $V(x)\ge0, \forall\ x\in\mathbb{R};$
  \item[(P2)] $\lim_{|x|\to\infty}V(x)=\infty;$
  \item[(P3)] $V(x),\ |V'(x)|\leqslant\delta_1e^{\delta_2|x|}, \forall\ x\in\mathbb{R},$ for some positive constants $\delta_1,\ \delta_2.$
\end{enumerate}
\vskip 0.2cm
\begin{theorem}\label{th9}
   Suppose there exists a Lyapunov function $V$ and positive constants $c_1,\ c_2,\ \delta,\ \alpha_1$ and $\alpha_2$ such that
\begin{equation}
  \label{18}
  \begin{split}
    &-\alpha_1+c_1|x|^{\delta}\leqslant V(x),\\
     &b(x)V'(x)+\frac{1}{2}a^2(x)V''(x)\leqslant-c_2V(x)+\alpha_2.
  \end{split}
\end{equation}
Then there exists a unique invariant probability measure $\rho$ for \eqref{19} which has the density $f_0$ with respect to the Lebesgue measure. Moreover, the invariant density $f_0$ is globally asymptotically stable in the sense that
  \begin{equation}
    \lim_{t\to\infty}\int_{-\infty}^{\infty}|\mathcal{V}(t)f(x)-f_0(x)|{\rm d}x=0,\ \forall\ f\in L^1_+(\mathbb{R}).
  \end{equation}
In addition, the process $X_t$ has the ergodic properties, i.e., for any $\rho$-integrable function $F:$
\begin{equation}
  \mathbb{P}_x\Big(\lim_{t\to\infty}\frac{1}{t}\int_0^tF(X_{\tau}){\rm d}\tau=\int_{-\infty}^{\infty}F(y)\rho({\rm d}y)\Big)=1,
\end{equation}for all $X_0=x\in\mathbb{R}.$
\end{theorem}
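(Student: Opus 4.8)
The statement is a Foster--Lyapunov criterion for a nondegenerate one-dimensional diffusion, so the plan is to situate it inside the Lasota--Mackey theory of stochastic (Markov) semigroups on $L^1$ and argue via the asymptotic-stability-or-sweeping dichotomy. Write $\mathcal{A}=b(x)\frac{d}{dx}+\frac{1}{2}a^2(x)\frac{d^2}{dx^2}$ for the generator associated with \eqref{19}, so that the second inequality in \eqref{18} is precisely the drift condition $\mathcal{A}V\leqslant -c_2 V+\alpha_2$, while the first inequality together with (P1)--(P3) makes $V$ a coercive Lyapunov function. The argument I would organize in four stages: (i) produce an invariant density from the drift condition; (ii) show $\{\mathcal{V}(t)\}_{t\geqslant 0}$ is an integral Markov semigroup with a strictly positive kernel; (iii) invoke the asymptotic-stability-versus-sweeping dichotomy and exclude sweeping; (iv) deduce ergodicity.

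For stage (i), I would apply Dynkin's formula to $V(X_t)$ and use the drift bound to obtain $\frac{d}{dt}\mathbb{E}_x[V(X_t)]\leqslant -c_2\,\mathbb{E}_x[V(X_t)]+\alpha_2$, whence by Gronwall $\mathbb{E}_x[V(X_t)]\leqslant e^{-c_2 t}V(x)+\alpha_2/c_2$ is bounded uniformly in $t$. The coercivity $c_1|x|^{\delta}-\alpha_1\leqslant V(x)$ then yields a uniform bound on $\mathbb{E}_x[|X_t|^{\delta}]$, so the Ces\`aro-averaged occupation measures are tight, and Krylov--Bogolyubov produces an invariant probability measure $\rho$. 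Since $(a(x))^2>0$ by (H1) and a unique classical fundamental solution to the Fokker--Planck equation \eqref{15} exists (so the transition probabilities are smooth), $\rho$ is absolutely continuous with a density $f_0\in L^1_+(\mathbb{R})$, i.e. a stationary density fixed by every $\mathcal{V}(t)$.

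Stage (iii) is the crux. From the smooth, everywhere-positive fundamental solution I would check that $\mathcal{V}(t)f(x)=\int_{\mathbb{R}}p(t,x,y)f(y)\,dy$ with kernel $p(t,x,y)>0$ for all $x,y$ and $t>0$; this makes $\{\mathcal{V}(t)\}$ an integral Markov semigroup with a strictly positive kernel, hence one whose densities have overlapping supports. For such semigroups the Foguel alternative (the Pich\'{o}r--Rudnicki theory underlying the cited results in \cite{LM,I,MLL}) asserts that the semigroup is either asymptotically stable or sweeping from compact sets. The Lyapunov function rules out sweeping: if mass swept to infinity, $\mathbb{E}[V(X_t)]$ would be forced to blow up under any tight initial law, contradicting the uniform bound from stage (i); equivalently, the very existence of the invariant density $f_0$ is incompatible with sweeping. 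Hence the semigroup is asymptotically stable, which simultaneously gives uniqueness of $f_0$ and the convergence $\lim_{t\to\infty}\int_{-\infty}^{\infty}|\mathcal{V}(t)f-f_0|\,dx=0$ for every $f\in L^1_+(\mathbb{R})$.

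Finally, for the ergodic property I would use that uniqueness of the invariant measure $\rho$ forces the stationary process to be ergodic, so Birkhoff's theorem gives $\frac{1}{t}\int_0^t F(X_{\tau})\,d\tau\to\int F\,d\rho$ almost surely under the stationary initialization; the strict positivity of the kernel (irreducibility and Harris recurrence) then upgrades this to a.s. convergence from every fixed starting point $x$. The main obstacle is stage (iii): establishing the integral-kernel structure with a strictly positive density and applying the dichotomy, together with the verification that the Foster--Lyapunov bound excludes the sweeping branch. The existence step (i) and the ergodic conclusion (iv) are comparatively routine once tightness and uniqueness are in hand.
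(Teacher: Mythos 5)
You should first note a structural point: the paper never proves Theorem~\ref{th9} at all --- it is stated as a known result quoted from p.~742 of \cite{MLL}, with pointers to Theorems~2 and~4 of \cite{I} and Theorem~11.9.1 of \cite{LM} --- so there is no in-paper argument to measure your proposal against, and the right comparison is with the proofs in those cited sources. Judged against them, your outline is correct but runs on a genuinely different stability mechanism. The cited proofs combine the Khasminskii-type Lyapunov bound with strict positivity of the classical fundamental solution to manufacture a nontrivial \emph{lower-bound function} for the semigroup: the drift inequality confines the mass, in Ces\`aro mean, to a compact set on which the kernel is bounded below, and asymptotic stability then follows from the lower-bound-function criterion (Theorem~7.4.1 in \cite{LM}), with existence and uniqueness of the invariant density falling out of asymptotic stability rather than being established beforehand. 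You instead get existence first (Dynkin plus Gronwall, coercivity, tightness, Krylov--Bogolyubov, absolute continuity via the smooth transition density) and then invoke the Pich\'or--Rudnicki dichotomy --- asymptotic stability versus sweeping from compact sets for integral stochastic semigroups with strictly positive kernels --- excluding sweeping because an invariant density assigns constant positive mass to some compact set. That dichotomy postdates the works the paper cites, but it is a legitimate and by now standard tool; your exclusion-of-sweeping step is exactly right, and what your route buys is a cleaner modular structure (existence decoupled from stability), at the price of importing heavier machinery than the self-contained lower-bound construction in \cite{LM}. Two details to tighten if you write this out: Dynkin's formula requires a localization argument, and this is precisely where property (P3) (the exponential bounds on $V$ and $|V'|$) earns its keep; and the strict positivity of the fundamental solution, on which both your route and the cited one hinge, should be recorded explicitly --- it follows from (H1) together with the parabolic Harnack inequality or strong maximum principle, given the classical fundamental solution whose existence the paper assumes via \cite{K} and I\^{t}o \cite{I}.
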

\begin{remark}
  In \cite{LM}, {\em regularity} of coefficients are required in Theorem~11.9.1. However, as pointed out on p. 365 in \cite{LM}, this regularity is simply for the existence and uniqueness of solutions to \eqref{15} and \eqref{115}. In other words, Theorem~11.9.1 on p.372 in \cite{LM} without regularity condition still holds if the existence and uniqueness of solutions to \eqref{15} and \eqref{115} are provided.
\end{remark}
In the following, we state the relationship between the solution $u(t,\xi)$ to \eqref{10} and the solution $p(t,x)$ to \eqref{5}.
\vskip 0.2cm
\begin{theorem}
  $$p(t,x)=g'(x)u(t,g(x))=\frac{N}{x(N-x)}u\Big(t,\log\frac{x}{N-x}\Big),\ \mbox{for\ all}\ t>0,\ x\in(0,N)$$ provided that the initial densities satisfy
  $$p_0(x)=\frac{N}{x(N-x)}u_0\Big(\log\frac{x}{N-x}\Big).$$ In particular, the invariant densities satisfy
  $$p^s_{\sigma}(x)=\frac{N}{x(N-x)}u^s_{\sigma}\Big(\log\frac{x}{N-x}\Big).$$
\end{theorem}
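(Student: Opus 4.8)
The plan is to prove this via the probabilistic change-of-variables formula for densities rather than by manipulating the two Fokker--Planck operators symbolically. The key structural fact is that $Y(t)=g(I(t))$ for the map $g$ in \eqref{3}, and by Lemma~\ref{le2} this $g$ is a strictly increasing $C^1$ bijection from $(0,N)$ onto $\mathbb{R}$ with $g'(x)=\frac{1}{N-x}+\frac{1}{x}=\frac{N}{x(N-x)}$. Since the FPE \eqref{10} governs the evolution of the density $p(t,\cdot)$ of the process $I_t$ while the FPE \eqref{5} governs the density $u(t,\cdot)$ of $Y_t=g(I_t)$, the two densities must be related by the standard rule for the density of a monotone transform of a random variable; inserting the explicit $g$ and $g'$ then yields precisely the stated identity.

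Concretely, first I would fix an interval $[a,b]\subset(0,N)$ and write
$$\int_a^b p(t,x)\,{\rm d}x=\mathbb{P}\{I(t)\in[a,b]\}=\mathbb{P}\{Y(t)\in[g(a),g(b)]\}=\int_{g(a)}^{g(b)}u(t,\xi)\,{\rm d}\xi,$$
using that $g$ is increasing. Substituting $\xi=g(x)$, ${\rm d}\xi=g'(x)\,{\rm d}x$ in the last integral and letting $[a,b]$ range over all subintervals of $(0,N)$ gives $p(t,x)=g'(x)\,u(t,g(x))$ for a.e.\ $x$, which is the claimed formula after inserting $g'(x)=N/[x(N-x)]$ and $g(x)=\log\frac{x}{N-x}$. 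The hypothesis on the initial densities is exactly this same identity at $t=0$: it guarantees that setting $Y(0)=g(I(0))$ is consistent with $I(0)\sim p_0$ and $Y(0)\sim u_0$, which is what licenses the relation for every $t>0$.

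For the invariant densities the statement is then automatic. Since $\nu^s_\sigma$ and $\kappa^s_\sigma$ are the stationary laws of $I_t$ and $Y_t=g(I_t)$, they are related by pushforward under the bijection $g$, so their densities satisfy $p^s_\sigma(x)=\frac{N}{x(N-x)}u^s_\sigma(\log\frac{x}{N-x})$; equivalently one lets $t\to\infty$ and uses the global asymptotic stability in Theorem~\ref{th3} and Theorem~\ref{th11}. As a consistency check, and an alternative route that bypasses the probabilistic interpretation, I would set $w(t,x):=g'(x)\,u(t,g(x))$ and verify by the chain rule that $w$ solves \eqref{10} whenever $u$ solves \eqref{5}, then invoke uniqueness of the generalized solution (Theorem~\ref{th0}) together with the matching initial data to conclude $p\equiv w$.

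The main obstacle is not the change of variables, which is elementary, but the bookkeeping that justifies that $p$ and $u$ are genuinely the densities of $I_t$ and $Y_t$ and that the identity propagates in $t$. In the probabilistic route this rests on $Y_t=g(I_t)$ (already established via It\^o's formula earlier in the paper) and uniqueness of the Markov evolution; in the PDE route it rests on the somewhat tedious transformation of the second-order Fokker--Planck operator under $x\mapsto g(x)$ and the uniqueness theorem for \eqref{10}. Either way, the single point requiring genuine care is the consistency of the initial data, which the stated hypothesis on $p_0$ and $u_0$ supplies exactly.
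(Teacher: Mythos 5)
Your proof is correct and takes essentially the same route as the paper: the paper writes the identity $F_1(t,x)=F_2(t,g(x))$ for the distribution functions of $I_t$ and $Y_t=g(I_t)$ and differentiates in $x$, which is exactly your pushforward-under-$g$ argument phrased with CDFs instead of interval probabilities. Your additional remarks (the PDE/uniqueness consistency check and the explicit handling of initial data) go beyond the paper's terse proof but do not change the underlying approach.
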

\begin{proof}
  Let $F_1(t,x)=\int_0^xp(t,y){\rm d}y$ and $F_2(t,\xi)=\int_{-\infty}^\xi u(t,\eta){\rm d}\eta.$ Then $$F_1(t,x)=F_2(t,g(x)),\ \forall\ t>0,\ x\in(0,N).$$ Hence for all $t>0,\ x\in(0,N),$ $$\frac{\partial F_1}{\partial x}(t,x)=\frac{\partial F_2}{\partial\xi}(t,g(x))g'(x),$$ i.e.,   $$p_{\sigma}(t,x)=g'(x)u(t,g(x)).$$
\end{proof}

\vskip 0.2cm
\begin{theorem}\label{th8}
\noindent\begin{enumerate}
\item[{\rm(1)}]
  $\kappa$ is an invariant measure for \eqref{2} if and only if $\nu=\kappa\circ g$ is an invariant measure for \eqref{1}.
  \item[{\rm(2)}] $u^s_{\sigma}$ is asymptotically stable if and only if $p^s_{\sigma}$ is asymptotically stable.
  \item[{\rm(3)}] $Y_t$ is ergodic if and only if $I_t$ is ergodic.
  \end{enumerate}
\end{theorem}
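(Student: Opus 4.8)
The whole theorem is driven by the single pathwise identity $Y(t)=g(I(t))$, equivalently $I(t)=g^{-1}(Y(t))$, established in the proof of Theorem~\ref{th1}, together with the fact (Lemma~\ref{le2}) that $g:(0,N)\to\mathbb{R}$ is a strictly increasing $C^1$-diffeomorphism. My plan is to exploit that $g$ is a \emph{deterministic}, time-independent bijection, so that every object attached to $I_t$ (transition kernel, invariant measure, occupation-time average, density) is the image under the change of variables $y=g(x)$ of the corresponding object for $Y_t$. The only analytic input beyond this is the change-of-variables formula $\int_0^N\varphi(x)\,\nu(\mathrm{d}x)=\int_{-\infty}^{\infty}\varphi(g^{-1}(y))\,\kappa(\mathrm{d}y)$ for $\nu=\kappa\circ g$, and the density-level identity $p(t,x)=g'(x)u(t,g(x))$ supplied by the preceding theorem relating the solution of \eqref{10} to the solution of \eqref{5}.

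For (1) I would write invariance in semigroup form. Denoting by $\mathcal{T}^I_t$ and $\mathcal{T}^Y_t$ the transition operators of $I$ and $Y$ acting on bounded measurable functions, the pathwise identity gives $\mathcal{T}^I_t f=\bigl(\mathcal{T}^Y_t(f\circ g^{-1})\bigr)\circ g$, since starting $I_0=x$ is the same as starting $Y_0=g(x)$ and $f(I(t))=(f\circ g^{-1})(Y(t))$. A measure $\nu$ on $(0,N)$ is invariant for $I$ iff $\int \mathcal{T}^I_t f\,\mathrm{d}\nu=\int f\,\mathrm{d}\nu$ for all bounded measurable $f$; substituting the identity above and changing variables with $\nu=\kappa\circ g$ turns this into $\int \mathcal{T}^Y_t(f\circ g^{-1})\,\mathrm{d}\kappa=\int (f\circ g^{-1})\,\mathrm{d}\kappa$. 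As $f$ ranges over bounded measurable functions on $(0,N)$, $f\circ g^{-1}$ ranges over all bounded measurable functions on $\mathbb{R}$, so the last line is precisely the invariance of $\kappa$ for $Y$, yielding both implications at once.

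For (2) I would introduce the operator $(\mathcal{T}u)(x):=g'(x)\,u(g(x))$, which is exactly the mechanism described by the preceding theorem. The change-of-variables formula shows $\|\mathcal{T}u\|_{L^1((0,N))}=\|u\|_{L^1(\mathbb{R})}$ and that $\mathcal{T}$ maps $L^1_+(\mathbb{R})$ bijectively onto $L^1_+((0,N))$, the support and normalization constraints in the two definitions corresponding under $y=g(x)$. The preceding theorem says precisely that $\mathcal{T}$ intertwines the two semigroups, $\mathcal{P}(t)\mathcal{T}=\mathcal{T}\,\mathcal{U}(t)$, and that $\mathcal{T}u^s_{\sigma}=p^s_{\sigma}$. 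Writing any density $q\in L^1_+((0,N))$ as $q=\mathcal{T}v$ with $v=\mathcal{T}^{-1}q\in L^1_+(\mathbb{R})$, isometry and intertwining give $\|\mathcal{P}(t)q-p^s_{\sigma}\|_{L^1}=\|\mathcal{T}(\mathcal{U}(t)v-u^s_{\sigma})\|_{L^1}=\|\mathcal{U}(t)v-u^s_{\sigma}\|_{L^1}$, so convergence to $0$ for every $q$ is equivalent to convergence to $0$ for every $v$; the two asymptotic-stability statements coincide.

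For (3) I would transfer the occupation-time averages pathwise. Given a $\nu$-integrable $K$ on $(0,N)$, set $F:=K\circ g^{-1}$; the change-of-variables formula makes $F$ $\kappa$-integrable with $\int F\,\mathrm{d}\kappa=\int K\,\mathrm{d}\nu$, and $F(Y_\tau)=K(I_\tau)$ for every $\tau$, so $\tfrac1t\int_0^t K(I_\tau)\,\mathrm{d}\tau=\tfrac1t\int_0^t F(Y_\tau)\,\mathrm{d}\tau$. If $Y_t$ is ergodic, the right-hand side converges a.s.\ to $\int F\,\mathrm{d}\kappa=\int K\,\mathrm{d}\nu$, giving ergodicity of $I_t$; the reverse direction is identical with $K=F\circ g$. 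I expect the only point requiring genuine care — the closest thing to an obstacle — to be in (2): confirming that $\mathcal{T}$ really carries the admissible class $L^1_+(\mathbb{R})$ onto $L^1_+((0,N))$ (matching the boundary-support behavior as $x\uparrow N$ and $x\downarrow 0$ against $\xi\to\pm\infty$) and that the intertwining from the preceding theorem holds for the full semigroups, not merely for the stationary densities. Everything else is bookkeeping with the change-of-variables formula.
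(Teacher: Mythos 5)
Your proposal is correct and, for part (2) --- the only case the paper actually proves --- it is essentially the paper's own argument: the paper takes an arbitrary $w_0\in L^1_+((0,N))$, sets $f_0(\xi)=[g^{-1}(\xi)]'\,w_0(g^{-1}(\xi))$ (your $\mathcal{T}^{-1}q$), and shows $\int_0^N|\mathcal{P}(t)w_0(x)-p^s_{\sigma}(x)|\,{\rm d}x=\int_{-\infty}^{\infty}|\mathcal{U}(t)f_0(\xi)-u^s_{\sigma}(\xi)|\,{\rm d}\xi$, which is exactly your isometry-plus-intertwining computation in change-of-variables form. The paper explicitly omits cases (1) and (3) (``We only prove case (2)''), so your transition-operator argument for (1) and pathwise occupation-time transfer for (3) simply fill in, in the natural way, what the paper leaves to the reader.
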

\vskip 0.2cm
\begin{proof}
We only prove case (2).
Suppose $u^s_{\sigma}$ is asymptotically stable, i.e.,  $$\lim_{t\to\infty}\int_{-\infty}^{\infty}|\mathcal{U}(t)f_0(\xi)-u^s_{\sigma}(\xi)|{\rm d}\xi=0,\ \forall\ f_0\in L^1_+(\mathbb{R}).$$ $\forall\ w_0\in L^1_+((0,N)),$ let $f_0(\xi)=[g^{-1}(\xi)]'w_0(g^{-1}(\xi)),$ then it is easy to verify that $f_0\in L^1_+(\mathbb{R}).$ Moreover, %by $$\mathcal{P}(t)w_0(x)=w(t,x)\ \text{and}\ \mathcal{U}(t)f_0(x)=f(t,x),$$ where $w$ and $f$ are solutions to \eqref{5} and \eqref{10} with initial functions $w_0$ and $f_0,$ respectively.
we can show that\[
    \int_0^N|\mathcal{P}(t)w_0(x)-p^s_{\sigma}(x)|{\rm d}x
    =\int_{-\infty}^{\infty}|\mathcal{U}(t)f_0(\xi)-u^s_{\sigma}(\xi)|{\rm d}\xi,
  \]which implies that $$\lim_{t\to\infty}\int_0^N|\mathcal{P}(t)w_0(x)-p^s_{\sigma}(x)|{\rm d}x=0,$$ i.e., $p^s_{\sigma}$ is asymptotically stable. Conversely, we can also prove that $u^s_{\sigma}$ is asymptotically stable provided that $p^s_{\sigma}$ is asymptotically stable.
\end{proof}

%Now we restate the the mean and variance of the stationary distribution as a corollary of Theorem~\ref{th5} \cite{G}.

\end{document}